\theoremstyle{plain}
\newtheorem{theorem}{Theorem}
\newtheorem{lemma}[theorem]{Lemma}
\newtheorem{remark}[theorem]{\bf Remark}
\newtheorem*{cornonumber}{Corollary}
\theoremstyle{definition}
\newtheorem{nothing*}[theorem]{}
\newtheorem{subnothing*}[sub]{}
\newtheorem*{exnonumber}{Example}
\theoremstyle{remark}
\newcommand{\cc}{\raise .4pt \hbox{{$\scriptstyle{\bullet}$}}}
\begin{document}

\title[
Underlying varieties, and group structures]{Underlying varieties,\\ and group structures}
\author[Vladimir L. Popov]{Vladimir L. Popov}
\address{Steklov Mathematical Institute,
Russian Academy of Sciences, Gub\-kina 8,
Moscow 119991, Russia}
\address{National Research University
``Higher School of Economics'', Mos\-cow,
Russia\newline
\ }
\email{popovvl@mi-ras.ru}

\dedicatory{To
the memory of
A.\;Bia{\l}ynicki-Birula}

\begin{abstract}


We explore to what extent the underlying variety of a connected algebraic group or the underlying manifold of a real Lie group determines its group structure.
 \end{abstract}

\maketitle

\noindent{\bf 1.\;Introduction.}
The central theme of this paper is the question as to what extent the underlying variety of a connected algebraic group or the underlying manifold of a real Lie group determines its group structure. The author was naturally led to consideration of it by the positive answer he gave in \cite{P2} to the question of B. Kunyavsky \cite{K} about the validity of the statement formulated below as Corollary of Theorem\;\ref{thm1}.\;This statement concerns the possibility to represent
the underlying variety of a connected reductive algebraic group in the form of a product of underlying varieties of its derived subgroup and connected component of the center. It follows from it, in particular, that the underlying variety of any connected reductive non-semisimple algebraic group can be represented as a product of algebraic varieties of positive dimension.

Sections 2, 3 make up the content of \cite{P2}, where
the possibility of such represen\-tations is explored. For some of them, in Theorem
\ref{thm1} is proved their existence, and in Theorems \ref {thm2}--\ref{thm6},
on the contrary, their non-existence.

Theorem \ref{thm1} shows that there are non-isomorphic reductive groups whose underlying varieties are isomorphic. In Sections 4--10, we explore the prob\-lem, naturally arising in connection with this, of dependence of the group structure of a connected algebraic group on the geometric properties of its underlying variety.\;A striking illustration of this dependence is the classical theorem about the commutativity of a connected algebraic group whose underlying variety is complete.\;In an exp\-li\-cit or implicit form, this problem was considered in the classical papers of A.\;Weil
\cite{W}, C.\;Chevalley \cite{Ch}, A.\;Borel \cite{Bor}, A.\;Grothendieck \cite[p.\,5-02, Cor.]{G}, M.\;Ro\-sen\-licht \cite[Thm.\,3]{R}, M.\;Lazar \cite[Thm.]{L}.

In Theorems
   \ref{toroidd}--\ref{torcr}, it is proved that such group characteristics of a con\-nected algebraic group as
  dimensions of its radical and unipotent ra\-di\-cal, reductivity, semisimplicity, solvability, unipotency, toricity, the property of being a semi-abelian variety
  can be expressed in terms of the
  geometric properties and numerical invariants of its
  underlying variety.

Theorem \ref{lem8} generalizes to the case of connected solvable affine algeb\-raic groups M. Lazar's theorem, which states that an algebraic group,
   whose underlying variety is isomorphic to an affine space, is unipotent.

    Theorem \ref{thm1}, when applied to connected semisimple algebraic groups (in contrast to its application to reductive non-semisimple ones), does not give a way to construct non-isomorphic such groups with isomorphic underlying varieties.
   However, in fact, such groups do exist: in Sections 6, 7 we find a method for constructing them.

It is well known (see \cite[\S4, Exer.\;18, p.\,122]{Bou}) that for $n\geqslant 7$, ​​there exist infinite (parametric) families of pairwise non-isomorphic $n$-dimen\-sional connected unipotent algebraic groups.\;Being isomorphic to the $n$-dimensional affine space $\mathbb A^n$, their underlying varieties are isomorphic to each other. We show that the situation is different for connected reductive algebraic groups: in Theorem \ref{thm8}, is proven that for any such group $R$,
 the number of all algebraic groups, considered up to isomorphism, whose underlying variety is isomorphic to the underlying variety of $R$, is finite.
 Generally speaking, this number is greater than $1$.\;We prove (Theorems \ref{thm8},
  \ref{thm9}) that if the group $R$ is either simply connected and semisimple, or simple, then it is equal to $1$.

The proof of  Theorem
   \ref{thm8} relies on the general
  finiteness theorem for
  the number of connected reductive algebraic groups of a fixed rank, considered up to isomorphism (Theorem \ref{finired});
  it is proved in the appendix (Section 10).\;Theorem \ref{finired}
is a fundamental fact of the theory of algebraic groups that is well known for semisimple groups (in which case it follows from the finiteness of their centers).\;However, in full ge\-ne\-ra\-lity (that is, for reductive, and not just semisimple groups), the author 
  failed to find it in the literature.

The obtained results imply similar results for connected compact real Lie groups, in particular, the finiteness theorem for the number of such groups of any fixed dimension (Theorem \ref{cogr}).\;As in the case of  Theorem \ref{finired},
   the author failed to find this fundamental fact of the theory of compact real Lie groups in the literature.\;The same applies to the finiteness theorem for the number of reduced root data of any fixed rank, which follows from the \ref{finired} theorem (Theorem \ref{rd}).

   The results of this paper were announced in \cite{P3}, \cite{P4}, \cite{P5}, and \cite{P6}.

   \vskip 2mm

{\it Acknowledgements.}   The author is grateful to T. Bandman, V. Gor\-batse\-vich, and Yu.\;Zarhin for
drawing his attention to the related pub\-li\-ca\-tions, and M. Brion, J.-P. Serre, and the referee for the com\-ments.

\vskip 3mm

{\it Conventions and notation.}

 We follow the point of view on algebraic groups adopted in \cite{Bo2}, \cite{Hu}, \cite{Sp}, and use the following notation:

\begin {enumerate} [\hskip 4.2mm $\cc$]
\item $k$ is an algebraically closed field, over which all algebraic varieties considered below are defined.
\item Groups are considered in multiplicative notation. The unit element of a group
$G$
is denoted by
$e$ (which group is meant will be clear from the context).
    \item For groups $G$ and $ H $, the notation $G\simeq H$ means that they
    are iso\-mor\-phic.
 \item ${\mathscr C}(G)$ is the center of a group $G$.
\item ${\mathscr D}(G)$ is the derived group of a group $G$.
    \item $\langle g \rangle$ is the cyclic group generated by an element $g$.
            \item A torus means affine algebraic torus, and a homomorphism of algebraic groups means  their algebraic homomorphism.
            \item ${\mathscr R}(G)$ and ${\mathscr R}_u (G)$ are, respectively, the radical and the unipotent radical of a connected  affine algebraic group $G$.
                 \item $G^\circ$ is the identity connected component
of an algebraic group or a Lie group $G$.
                \item ${\rm Lie}(G)$ is the Lie algebra of an algebraic group or a Lie group $G$.
                \item If $\alpha\colon G\to H$ is a homomorphism of algebraic groups, then
                \begin{equation}\label{differe}
                d_{{\boldmath \centerdot}}\hskip -.2mm\alpha\colon {\rm Lie}(G)\to {\rm Lie}(H)
                \end{equation}
                is its differential at the unit element.
                \item ${\mathbb G}_a$ и ${\mathbb G}_m$ are respectively, the one-dimensional additive and multipli\-cative algebraic groups.
                \item ${\rm Hom}(G, H)$ and $ {\rm Aut}(G)$ are the groups of algebraic homomor\-phisms if
$G$ and $H$ are algebraic groups.\;The character of such a group $G$ is
an element of the group ${\rm Hom}(G, \mathbb G_m)$.
  \item $\mathbb A_n $ is the $n$-dimensional coordinate affine space.
    \item $\mathbb A_*^n$ is the product of $n$ copies of
    the variety $\mathbb A^1 \setminus \{0\}$.
        \item Let $p: = {\rm char}(k)$ and $a \in \mathbb Z$. If $ap \neq 0$, then $a'$ denotes
the quotient of dividing $a$ by the greatest power of $p$ that divides $a$. If $ap=0$,
            then $a': = a$.
            \end{enumerate}

\noindent{\bf  2.\;Reductive groups with isomorphic group varieties.}
In this section, we prove the existence of some representations of underlying
va\-rie\-ties of affine algebraic groups in the form of products of algebraic varieties, and also the existence of connected non-isomorphic reductive non-semisimple algebraic groups whose underlying varieties are isomor\-phic algebraic varieties.

  Let  $G$ be a connected reductive algebraic group. Then
   \begin{equation*}
   D:={\mathscr D}(G)\quad\mbox{and}\quad Z:={\mathscr C}(G)^\circ
   \end{equation*}
  are respectively a connected semisimple algebraic group
and a torus (see \cite[Sect. 14.2, Prop.\,(2)]{Bo2}).
The algebraic groups $D \times Z$ and $G$ are not always isomorphic; the latter is equivalent to the equality $D \cap Z = \{e\}$, that, in turn,
is equivalent to the property that the isogeny of algebraic groups $D \times Z \to G, \;
(d, z) \mapsto d z $, is their isomorphism.

\begin{theorem}\label{thm1}
There is an injective homomorphism of algebraic groups
$\iota \colon Z \hookrightarrow G $ such
that the mapping
$$ \varphi \colon D \times Z \to G, \; (d, z) \mapsto d \! \cdot \! \iota (z), $$
is an iso\-mor\-phism of
algebraic varieties \textup(but, in general, not a homo\-mor\-phism
of algebraic groups\textup).
\end{theorem}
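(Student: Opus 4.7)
The plan is to find a subtorus $T' \subset G$ of dimension $\dim Z$ that complements the derived group $D$ — meaning $D \cap T' = \{e\}$ and $G = D \cdot T'$ — and then to let $\iota\colon Z \to G$ be any algebraic-group isomorphism $Z \xrightarrow{\sim} T'$ followed by the inclusion $T' \hookrightarrow G$. Such a torus isomorphism exists because tori of equal dimension over an algebraically closed field are isomorphic, and verifying the theorem then reduces to showing that the bijective morphism $D \times T' \to G$, $(d,t') \mapsto dt'$, is an isomorphism of varieties.

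To construct $T'$, I would fix a maximal torus $T \subset G$ containing $Z$ and write $T_D$ for the maximal torus of $D$ inside $T$. The preliminary observation is that $T \cap D = T_D$: the finite group $F := Z \cap D$ is central in $G$, hence central in $D$, hence contained in every maximal torus of $D$, so $F \subset T_D$; combined with $T = T_D \cdot Z$ this yields $T \cap D = (T_D Z)\cap D = T_D \cdot F = T_D$. The short exact sequence of tori $1 \to T_D \to T \to T/T_D \to 1$ then splits, since ${\rm Ext}^1$ vanishes in the category of tori (their character groups are free abelian), and a splitting produces $T = T_D \times T'$ with $\dim T' = \dim Z$ and $T' \cap D = T' \cap (T \cap D) = T' \cap T_D = \{e\}$.

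With $\iota\colon Z \xrightarrow{\sim} T'$ chosen, the map $\varphi(d,z) = d\iota(z)$ is surjective because $G = D \cdot T = D \cdot T_D \cdot T' = D \cdot T'$, and injective because $d_1\iota(z_1) = d_2\iota(z_2)$ forces $d_2^{-1}d_1 \in D \cap T' = \{e\}$. To promote this bijection to an isomorphism of varieties, I would check that $d\varphi_{(e,e)}\colon {\rm Lie}(D)\oplus {\rm Lie}(Z) \to {\rm Lie}(G)$ is an isomorphism: the root-space decomposition gives ${\rm Lie}(D) + {\rm Lie}(T) = {\rm Lie}(G)$, and combining this with ${\rm Lie}(T) = {\rm Lie}(T_D) \oplus {\rm Lie}(T')$ and ${\rm Lie}(T_D) \subset {\rm Lie}(D)$ produces ${\rm Lie}(D) + {\rm Lie}(T') = {\rm Lie}(G)$, which by a dimension count ($\dim D + \dim Z = \dim G$) must be direct. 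Because $\varphi$ is equivariant under the commuting left-$D$ and right-$Z$ actions (with $Z$ acting on $G$ through $\iota$), and these are together transitive on $D \times Z$, étaleness at $(e,e)$ propagates to every point; a bijective étale morphism between smooth irreducible varieties is an isomorphism.

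The main conceptual step is the first one: recognizing that although the natural isogeny $D \times Z \to G$ (whose kernel is $F$ embedded antidiagonally) is rarely an isomorphism, it can be corrected by substituting for the inclusion $Z \hookrightarrow G$ an embedding $\iota$ whose image is a complementary torus $T'$ in place of $Z$ itself. When $F \neq \{e\}$, the torus $\iota(Z) = T'$ differs from $Z$ and, being a complement in $T$ to the non-central $T_D$, is not contained in $\mathscr{C}(G)$, so $\varphi$ cannot be a group homomorphism — matching the parenthetical caveat in the statement.
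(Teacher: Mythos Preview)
Your proof is correct and follows the same architecture as the paper's: split a maximal torus of $G$ as $T_D \times T'$ (the paper writes $S$ for your $T'$), take $\iota$ to be any isomorphism $Z \xrightarrow{\sim} T'$, check that $(d,t') \mapsto dt'$ is bijective, and conclude via the differential at the identity. The one substantive difference is the injectivity step. The paper argues through the Bruhat decomposition: writing $g = u'n_\sigma u\, t_D\, s$ uniquely (with $t_G = t_D s$ the splitting of the torus part), one sees that $g \in D$ forces $s = e$, so the factorization $g = d\cdot s$ is unique. You instead establish directly that $T \cap D = T_D$ (using that $Z \cap D$, being central in $D$, lies in every maximal torus of $D$), whence $T' \cap D = T' \cap T_D = \{e\}$ and injectivity is immediate. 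This bypasses Bruhat entirely and is a genuine simplification; what the paper's route buys in exchange is that it never needs to analyze $T \cap D$ or invoke that central elements lie in every maximal torus. Your \'etaleness-by-equivariance argument and the paper's separability-at-one-point argument are equivalent ways of feeding into the standard criterion that a bijective separable morphism onto a normal variety is an isomorphism.
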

\begin{cornonumber}
\label{iv}
\textup The underlying varieties of \textup(in general, non-isomorphic\textup) algeb\-raic groups $D \times Z$ and $G$ are isomorphic algebraic varieties.
\end{cornonumber}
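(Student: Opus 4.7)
My plan is to construct $\iota$ by finding a subtorus $Z'\subset G$ that is a ``complement'' to $D$ inside a maximal torus containing $Z$, and then to take $\iota$ to be the composition of any isomorphism of algebraic groups $Z\to Z'$ with the inclusion $Z'\hookrightarrow G$. The point is that $Z$ itself intersects $D$ in the finite group $D\cap Z$, which is the only obstruction to $\varphi$ being a bijection; moving $Z$ slightly inside a maximal torus of $G$ should eliminate this obstruction without destroying the product decomposition.

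I would begin by fixing a maximal torus $T$ of $G$ containing $Z$; then $T_D:=T\cap D$ is a maximal torus of $D$, and the multiplication map $T_D\times Z\to T$ is surjective with finite kernel isomorphic to $D\cap Z$. The crucial step is to show that the closed embedding $T_D\hookrightarrow T$ admits a retraction as a homomorphism of algebraic groups; equivalently, that $T$ decomposes as an internal direct product $T=T_D\times Z'$ for some closed subtorus $Z'\subset T$. This is a consequence of Cartier duality between tori and free abelian groups: the induced surjection $X^{*}(T)\twoheadrightarrow X^{*}(T_D)$ on character lattices splits because $X^{*}(T_D)$ is free abelian, and any such splitting corresponds to the sought decomposition of $T$.

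Given such a $Z'$, I would verify three things. First, $D\cap Z'=\{e\}$, because any element of $D\cap Z'$ lies in $T\cap D=T_D$, hence in $T_D\cap Z'=\{e\}$. Second, a dimension count gives $\dim Z'=\dim T-\dim T_D=\dim Z$, so $Z'$ and $Z$ are isomorphic as algebraic groups; fix any such isomorphism and call $\iota\colon Z\to Z'\subset G$ the resulting injective homomorphism. Third, $\varphi$ factors as the composition of the variety isomorphism $\mathrm{id}\times\iota\colon D\times Z\to D\times Z'$ with the multiplication map $\mu\colon D\times Z'\to G$, $(d,z')\mapsto dz'$. The map $\mu$ is bijective on points (since $D\cap Z'=\{e\}$ and $DZ'=G$ by equal dimension and connectedness of $G$), and its differential at $(e,e)$ is the summation $\mathrm{Lie}(D)\oplus\mathrm{Lie}(Z')\to\mathrm{Lie}(G)$, a linear isomorphism because $\mathrm{Lie}(D)\cap\mathrm{Lie}(Z')\subset\mathrm{Lie}(T_D)\cap\mathrm{Lie}(Z')=0$ and the dimensions match. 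By translation $\mu$ is étale everywhere, and being bijective it is therefore a variety isomorphism.

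The decisive ingredient is the existence of the torus complement $Z'$, which rests entirely on the freeness of the character lattice $X^{*}(T_D)$; without this splitting one could only produce the isogeny $D\times Z\to G$ with its finite kernel. This is also the source of the ``in general not a homomorphism'' caveat stated in the theorem: the subtorus $Z'$ is usually not central in $G$ (unlike $Z$), so although $\iota$ is itself a group homomorphism, elements of $D$ need not commute with $\iota(Z)=Z'$, which breaks the group-homomorphism property of $\varphi$.
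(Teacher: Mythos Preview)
Your proof is correct and follows essentially the same strategy as the paper: find a torus complement $Z'$ (the paper calls it $S$) to the maximal torus $T_D$ of $D$ inside a maximal torus of $G$, and show that multiplication $D\times Z'\to G$ is a variety isomorphism. Two minor remarks: your injectivity argument via $D\cap Z'\subseteq (D\cap T)\cap Z'=T_D\cap Z'=\{e\}$ is cleaner than the paper's, which invokes the Bruhat decomposition to reach the same conclusion; on the other hand, your surjectivity justification ``by equal dimension and connectedness'' is not quite enough on its own (an injective \'etale morphism between connected varieties of the same dimension need not be surjective), but it is immediately fixed by the observation $Z\subseteq T=T_D\cdot Z'$, whence $G=DZ\subseteq D\,T_D\,Z'=DZ'$---which is exactly how the paper argues it.
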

\begin{remark}{\rm
The existence of $\iota $ in the proof of Theorem \ref{thm1} is established by an explicit construction.}
\end{remark}

\begin{exnonumber}
[{\rm \cite[Thm.\;8,\;Proof]{P1}}]\label{exa}  Let $G={\rm GL}_n$. Then $D={\rm SL}_n$, $Z=\{{\rm diag}(t,\ldots, t)\mid t\in k^\times\}$, and one can take
$${\rm diag}(t,\ldots, t)\mapsto {\rm diag}(t, 1,\ldots, 1)$$ as $\iota$. In this example, if  $n\geqslant 2$, then $G$ and $D\times Z$ are non-isomorphic algebraic groups, because the center of $G$ is connected and that of $D\times Z$ is not.
\end{exnonumber}

\begin{proof}[Proof of Theorem {\rm\ref{thm1}}]
Let $T_D$ be a maximal torus of the group $D$, and let $T_G$ be a maximal torus of the
group $G$ containing $T_D$. The torus $T_D$ is a direct factor of the torus $T_G$, i.e., in the latter, there is a torus $S$ such that the map
$T_D \times S \to T_G, \; (t, s) \mapsto ts$, is an isomorphism of algebraic groups (see \cite [8.5, Cor.] {Bo2}).
We shall show that the mapping
\begin {equation} \label{psi}
\psi \colon D \times S \to G, \quad (d, s) \mapsto d s,
\end {equation}
is an isomorphism of algebraic varieties.

We have
(see \cite[Sect. 14.2, Prop. (1),(3)]{Bo2}):
\begin{equation}\label{pro}
\mbox{\rm (a)} \,\;Z\subseteq T_G,\quad
\mbox{\rm (b)}\,\; DZ=G,\quad \mbox{\rm (c)}\,\;\mbox{$|D\cap Z|<\infty$}.
\end{equation}

Let $g\in G$. In view of
\eqref{pro}{\rm(b)}, there are the elements $d\in D$, $z\in Z$ such that
$g=dz$,
and in view of
\eqref{pro}(a)
and the definiton of $S$, there are the elements
$t\in T_D$, $s\in S$ such that $z=ts$. We have $dt\in D$ and
$\psi(dt, s)=dts=g$. Therefore, the morphism $\psi$ is surjective.

Consider in $G$ a pair of mutually opposite Borel subgroups containing $T_G$. Their
unipotent radicals $U$ and $U^-$
lie in the group $D$. Let $N_D(T_D)$ and $N_G(T_G)$
be the normalizers of the tori $T_D$ and $T_G$ in the groups $D$ and $G$, respectively. Then
 $N_D(T_D)\subseteq N_G(T_G)$ in view of  \eqref{pro}{\rm(b)}. The homomor\-phism   $N_D/T_D\to N_G/T_G$  induced by this embedding is an isomorphism of groups  (see \cite[IV.13]{Bo2}), by which we identify them and denote by  $W$.
For every element $\sigma\in W$, we fix a representative $n_\sigma\in N_D(T_D)$.
The group $U\cap n_{\sigma}U^-n_{\sigma}^{-1}$ does not depend on the choice of this representative because $T_D$ normalizes the group $U^-$; we denote it by $U'_{\sigma}$.

It follows from the Bruhat decomposition that for every element $g\in G$,
there are uniquely defined  elements $\sigma\in W$, $u\in U$, $u'\in U'_\sigma$,  and
$t^{\ }_G\in T_G$ such that
$g=u'n_\sigma u t^{\ }_G$ (see \cite[28.4, Thm.]{Hu}).  In view of the definition of the torus $S$, there are uniquely defined  elements $t^{\ }_D\in T_D$ and $s\in S$ such that $t^{\ }_G=t^{\ }_Ds$, and since
 $u', n_\sigma, u, t^{\ }_D\in  D$, the condition $g\in D$ is equivalent to the condition $s=e$. It follows from this and the definition of the morphism $\psi$ that the latter is injective.

Thus,  $\psi$ is a bijective morphism. Therefore, to prove that it is an iso\-mor\-phism of algebraic varieties, it remains to prove its separability  (see \cite[Sect. 18.2, Thm.]{Bo2}). We have
${\rm Lie}\,(G)={\rm Lie}\,D+{\rm Lie}\,(T_G)$ (see \cite[Sect. 13.18, Thm.]{Bo2}) and  ${\rm Lie}\,(T_G)={\rm Lie}\, (T_D)+{\rm Lie}\,(S)$ (in view of the definition of the torus $S$). Therefore,
\begin{equation}\label{Lie}
{\rm Lie}\,(G)={\rm Lie}\,(D)+{\rm Lie}\,(S).
\end{equation}
On the other hand, from \eqref{psi}
it is obvious that the restrictions of the morphism $\psi$ to the subgroups $D\times\{e\}$ and $\{e\}\times S$
in $D\times S$, are isomor\-phisms respectively with
the subgroups $D$ and $S$ in $G$. Since
${\rm Lie}\,(D\times S)=
{\rm Lie}\,(D\times \{e\})
+ {\rm Lie}\,(\{e\} \times S),
$
from \eqref{Lie} it follows that
the differential of
the morphism  $\psi$ at the point $(e, e)$ is surjective.
Therefore (see   \cite[Sect. 17.3, Thm.]{Bo2}), the morphism $\psi$ is separable.

Since $\psi$ is an isomorphism, from \eqref{psi} it follows that $\dim (G)=\dim (D)\break +\dim (S)$. On the other hand, from
\eqref{pro}{\rm(b)},{\rm(c)}
it follows that $\dim (G)=\dim (D)+\dim (Z)$. Therefore,
$Z$ and $S$ are equidimensional and hence isomorphic tori. Consequently, as $\iota$ one can take the composition of any tori isomorphism $Z\to S$ with the identity embedding $S \hookrightarrow G$.
\end{proof}

\noindent{\bf  3.\;Properties of factors.}
In contrast to the previous section, this one, on the contrary, concerns the non-existence of some representations the underlying variety of an affine algebraic group as a product of algebraic varieties.

 \begin{theorem}\label{thm2} An algebraic variety, on which there is a non-constant
invert\-ible regular function, cannot be a direct factor of the underlying variety of a connected
semisimple algebraic group.
\end{theorem}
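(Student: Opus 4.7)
The plan is to reduce the statement to the well-known fact that a connected semisimple algebraic group carries no non-constant invertible regular functions. I would first recall Rosenlicht's theorem (M.\,Rosenlicht, cited in the introduction), which asserts that for a connected algebraic group $H$ every invertible regular function is of the form $c\cdot\chi$ with $c\in k^\times$ and $\chi\in\mathrm{Hom}(H,\mathbb{G}_m)$; equivalently, $k[H]^\times/k^\times$ is isomorphic to the character group $X(H)$.

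Next, I would observe that for a connected semisimple group $G$ one has $G=\mathscr{D}(G)$, so every character of $G$ factors through the trivial group $G/\mathscr{D}(G)$; hence $X(G)=0$ and therefore $k[G]^\times=k^\times$. In other words, the only invertible regular functions on $G$ are the nonzero constants.

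Finally, suppose for contradiction that the underlying variety of $G$ is isomorphic to a product $X\times Y$ of algebraic varieties with a non-constant function $f\in k[X]^\times\setminus k^\times$. Pulling $f$ back along the projection $X\times Y\to X$ (composed with the chosen isomorphism onto $G$) produces an element of $k[G]^\times$; it is invertible with inverse the pullback of $f^{-1}$, and it is non-constant because the projection is surjective and $f$ is non-constant on $X$. This contradicts the previous paragraph, completing the argument.

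There is no real obstacle beyond correctly invoking Rosenlicht's theorem; the only point that warrants care is the verification that the pulled-back function is both invertible and genuinely non-constant, which is immediate from the surjectivity of the projection $X\times Y\to X$.
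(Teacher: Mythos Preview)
Your proof is correct and follows essentially the same approach as the paper: pull back the non-constant invertible function from the factor to obtain a non-constant invertible function on $G$, then invoke Rosenlicht's theorem together with the fact that a connected semisimple group has no non-trivial characters. The only cosmetic difference is that the paper normalizes the pulled-back function by its value at $e$ to obtain a character directly, whereas you first deduce $k[G]^\times=k^\times$ and then derive the contradiction; the content is identical.
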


\begin{proof}
If the statement of Theorem \ref{thm2} were not true, then the existence
of the non-constant invertible regular function specified in it would imply the existence of such a function on a connected semisimple algebraic group. Dividing this function by its value at the unit element, we would then get, according to \cite[Thm.\;3]{R},
a non-trivial character of this group, which contradicts the absence
of non-trivial characters of any connected semisimple groups.
\end{proof}

Below, unless otherwise stated, we assume that $k = \mathbb C$.
By the Lefschetz principle, Theorems \ref{thm5}, \ref{thm6}, \ref{thm7}, \ref{thm8}, \ref{thm9}
proved below are valid for any field $k$ of characteristics zero.
Topological terms refer to classical topology, and homology and cohomology are singular.

Every complex reductive algebraic group $G$ has a compact real form, every two such forms are conjugate, and if ${\sf G}$ is one of them, then the topological manifold
$G$ is homeomorphic to the product of ${\sf G}$ and a Euclidean space; see
\cite[Chap.\,5, \S2, Thms.\,2, 8, 9]{OV}. Therefore,
$G$ and ${\sf G}$ have the same homology and cohomology.
This is used below without further explanation.

\begin{theorem}\label{thm3}
If a $d$-dimensional algebraic variety $X$
is a direct factor
of the underlying variety of a connected reductive algebraic group, then
\begin{equation*}
H_d(X, \mathbb Z)\simeq\mathbb Z\quad \mbox{and}\quad H_i(X, \mathbb Z)=0\;\; \mbox{for $i>d$}.
\end{equation*}
\end{theorem}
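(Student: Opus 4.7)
The plan is to combine three ingredients: the Andreotti--Frankel theorem on the homotopy type of Stein manifolds, the identification (recalled in the paragraph preceding the theorem) of a connected reductive complex group $G$ with its compact real form $\mathsf G$ up to a Euclidean factor, and the K\"unneth formula. Let $G$ be the ambient connected reductive group and write $X\times Y\simeq G$ for the product decomposition; set $n:=\dim G$, so that $\dim Y=n-d$. Since $G$ is smooth and affine and each slice $X\times\{y\}$, $\{x\}\times Y$ is a closed subvariety isomorphic to $X$, resp.\ $Y$, both direct factors $X$ and $Y$ are smooth and affine, hence Stein manifolds.

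By Andreotti--Frankel, each of $X$ and $Y$ admits the homotopy type of a CW complex of real dimension at most its complex dimension. This immediately yields $H_i(X,\mathbb Z)=0$ for $i>d$ (the second assertion of the theorem) and $H_j(Y,\mathbb Z)=0$ for $j>n-d$; as a byproduct, the top-degree groups $H_d(X,\mathbb Z)$ and $H_{n-d}(Y,\mathbb Z)$ are \emph{free} abelian, since in a CW complex of dimension $m$ the group $H_m$ sits inside the free cellular $m$-chain group. On the ambient side, the paragraph preceding the theorem identifies $G$ topologically with $\mathsf G$ up to a Euclidean factor, and $\mathsf G$ is a compact connected oriented real manifold of real dimension $n$; Poincar\'e duality therefore gives $H_n(G,\mathbb Z)\simeq\mathbb Z$.

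Applying the K\"unneth formula to $X\times Y$ in total degree $n$, the dimension bounds force every summand $H_p(X,\mathbb Z)\otimes H_q(Y,\mathbb Z)$ with $p+q=n$ to vanish except for $(p,q)=(d,n-d)$, and the $\mathrm{Tor}$ contributions vanish because the relevant top-degree groups are free. One is left with
\[
\mathbb Z \;\simeq\; H_d(X,\mathbb Z)\otimes_{\mathbb Z} H_{n-d}(Y,\mathbb Z),
\]
and a tensor product of two free abelian groups equals $\mathbb Z$ only when each factor has rank one, yielding $H_d(X,\mathbb Z)\simeq\mathbb Z$. The point that needs the most care is the freeness of the top-degree groups: without it, K\"unneth would leave room for torsion contributions in degree $n$ and the rank comparison would break down. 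Andreotti--Frankel supplies this freeness automatically from the CW-dimension bound, after which the proof reduces to a bookkeeping exercise with ranks in top degree.
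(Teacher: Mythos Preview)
Your proof is correct and follows essentially the same route as the paper's: Andreotti--Frankel for the vanishing above the complex dimension, the compact real form for the top homology of $G$, and the K\"unneth formula to isolate the top-degree tensor product. The only technical difference is that the paper passes to $\mathbb Q$-coefficients for the K\"unneth step (so that Tor vanishes automatically) and then recovers the $\mathbb Z$-statement from finite generation plus torsion-freeness of $H_d(X,\mathbb Z)$, whereas you stay with $\mathbb Z$-coefficients throughout and use the freeness of $H_d(X,\mathbb Z)$ and $H_{n-d}(Y,\mathbb Z)$ to kill the Tor contributions directly.
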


\begin{proof}
Suppose there is a connected reductive
algebraic group $G$ and an algebraic variety $Y$ such that the underlying variety of
the group $G$ is isomorphic to $X \times Y$. Let $n: = \dim(G)$; then $\dim(Y) = n-d$.
The algebraic varieties
$X$ and $Y$ are irreducible non-singular and affine. Therefore
(see \cite[Thm. 7.1]{M}),
\begin{equation}\label{H}
H_i(X, \mathbb Z)=0 \;\;\mbox{for $i>d$}, \;\;\;H_j(Y, \mathbb Z)=0\;\;\mbox{for $j>n-d$}.
\end{equation}
By the universal coefficient theorem, for every algebraic variety $V$ and every $i$, we have
\begin{equation}\label{HHH}
H_i(V, \mathbb Q)\simeq H_i(V, \mathbb Z)\otimes \mathbb Q,
\end{equation}
and by the K\"unneth formula,
\begin{equation}\label{HHHH}
H_n(G,\mathbb Q)\simeq H_n(X\times Y, \mathbb Q)\simeq \textstyle\bigoplus_{i+j=n}H_i(X, \mathbb Q)\otimes H_j(Y, \mathbb Q).
\end{equation}
Therefore, from \eqref{H}, \eqref{HHHH} it follows that
\begin{equation}\label{H-H}
H_n(G, \mathbb Q)\simeq H_d(X, \mathbb Q)\otimes H_{n-d}(Y, \mathbb Q).
\end{equation}

Consider a compact real form ${\sf G}$ of the group $G$. Since ${\sf G}$
is a closed connected orientable $n$-dimensional topological manifold,
$H_n({\sf G}, \mathbb Q) \simeq \mathbb Q$. Hence, $H_n(G, \mathbb Q) \simeq \mathbb Q $. From this and \eqref {H-H} it follows that $H_d (X, \mathbb Q)\break \simeq \mathbb Q$.
In turn, in view of
\eqref{HHH}, the latter implies $H_d (X, \mathbb Z) \simeq \mathbb Z $, because $H_d (X, \mathbb Z)$  is a finitely generated (see \cite [Sect. 1.3]{D}) torsion-free abelian group (see \cite [Thm. 1]{AF}).
\end{proof}

\begin{cornonumber}
A contractible algebraic variety \textup(in particular, $\mathbb A^d$\textup) of posi\-tive
dimension cannot be a direct factor of the underlying variety of
a con\-nected reductive algebraic group.
\end{cornonumber}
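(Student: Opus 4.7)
The plan is to apply Theorem~\ref{thm3} directly. Let $X$ be a contractible algebraic variety of positive dimension $d\geq 1$, and suppose for contradiction that $X$ is a direct factor of the underlying variety of some connected reductive algebraic group. Then Theorem~\ref{thm3} gives $H_d(X,\mathbb Z)\simeq \mathbb Z$.

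On the other hand, contractibility of $X$ means that $X$ is homotopy equivalent to a point, so all positive-degree singular homology of $X$ vanishes; in particular $H_d(X,\mathbb Z)=0$, which is incompatible with $H_d(X,\mathbb Z)\simeq\mathbb Z$. This contradiction proves the general assertion.

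For the parenthetical case $X=\mathbb A^d$, one only needs to observe that under the standing assumption $k=\mathbb C$ the classical topology on $\mathbb A^d$ is that of $\mathbb R^{2d}$, which is contractible; hence $\mathbb A^d$ of positive dimension falls under the general statement.

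There is no serious obstacle here: the corollary is a one-line consequence of the top-degree homology computation that is already the content of Theorem~\ref{thm3}. The only subtlety worth flagging is the standing hypothesis $k=\mathbb C$ introduced just before that theorem, together with the Lefschetz-principle remark made by the author, which allows the resulting statement to be transported to an arbitrary algebraically closed field of characteristic zero once it has been established over $\mathbb C$.
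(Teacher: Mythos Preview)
Your proof is correct and is exactly the intended argument: the corollary is stated in the paper without proof because it follows immediately from Theorem~\ref{thm3}, and your derivation---contractibility forces $H_d(X,\mathbb Z)=0$, contradicting $H_d(X,\mathbb Z)\simeq\mathbb Z$---is precisely that one-line deduction. One small remark: the author's Lefschetz-principle comment explicitly lists only Theorems~\ref{thm5}--\ref{thm9}, not Theorem~\ref{thm3} or this corollary, so your final sentence about transporting the result to arbitrary characteristic zero goes slightly beyond what the paper itself asserts (though it is of course true).
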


\begin{theorem}\label{thm5}
An algebraic curve cannot be a direct factor of the under\-lying variety of a connected
semisimple algebraic group.
\end{theorem}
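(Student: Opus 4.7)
The plan is to combine Theorem \ref{thm3} with Theorem \ref{thm2}, reducing the problem to a classification of smooth affine curves with first homology $\mathbb Z$. Suppose, for contradiction, that the underlying variety of some connected semisimple algebraic group $G$ is isomorphic to $C \times Y$ for an algebraic curve $C$ and a variety $Y$. Fixing a closed point $y_0 \in Y$ embeds $C \simeq C \times \{y_0\}$ as a closed subvariety of the affine smooth variety $G$, so $C$ must be smooth, affine, and (by the paper's convention) irreducible.

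Since every semisimple group is reductive, Theorem \ref{thm3} applied with $d = 1$ yields $H_1(C, \mathbb Z) \simeq \mathbb Z$. Working over $k = \mathbb C$ (the extension to arbitrary characteristic zero $k$ being handled afterwards by the Lefschetz principle, as announced in the excerpt), the next step is to classify the smooth affine complex curves with this property. Writing $C = \bar C \setminus \{p_1, \ldots, p_n\}$ for a smooth projective curve $\bar C$ of genus $g$, affineness forces $n \geq 1$, and the standard Euler-characteristic or deformation-retract argument (the curve retracts to a bouquet of $2g + n - 1$ circles) gives $H_1(C, \mathbb Z) \simeq \mathbb Z^{2g + n - 1}$. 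The equation $2g + n - 1 = 1$ together with $n \geq 1$ forces $(g, n) = (0, 2)$, so $C \simeq \mathbb P^1 \setminus \{p_1, p_2\} \simeq \mathbb G_m$.

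To conclude, I would observe that the identity coordinate on $\mathbb G_m$ is a non-constant invertible regular function, so Theorem \ref{thm2} immediately forbids $\mathbb G_m$ from being a direct factor of the underlying variety of any connected semisimple algebraic group. This contradicts the assumed decomposition, finishing the argument.

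I do not anticipate a serious obstacle: Theorem \ref{thm3} carries the topological weight, Theorem \ref{thm2} supplies the final contradiction, and the intermediate curve classification is essentially forced once affineness and the rank-one condition on $H_1$ are combined. The only point requiring a pinch of care is the bookkeeping in the homology computation of the punctured Riemann surface, but since $n \geq 1$ eliminates the elliptic-curve case $(g,n) = (1,0)$, the unique surviving candidate $\mathbb G_m$ is ruled out at once.
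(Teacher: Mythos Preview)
Your argument is correct, but it reaches the classification of the curve by a different route than the paper. The paper does not compute $H_1(C,\mathbb Z)$ from the genus--puncture formula; instead it uses that the underlying variety of $G$ is rational, so the projection $G\to C$ makes $C$ unirational, hence rational by L\"uroth's theorem. Thus $C$ is an open subset $U$ of $\mathbb A^1$, and the two cases $U=\mathbb A^1$ and $U\neq\mathbb A^1$ are excluded by Theorem~\ref{thm3} and Theorem~\ref{thm2} respectively. Your approach invokes Theorem~\ref{thm3} first to pin down $H_1(C,\mathbb Z)\simeq\mathbb Z$, then the topology of punctured Riemann surfaces to force $C\simeq\mathbb G_m$, and finally Theorem~\ref{thm2}; this is a bit more direct (you land on a single candidate rather than a case split), but it is intrinsically transcendental. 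The paper's L\"uroth argument is deliberately more algebraic: as the remark following Theorem~\ref{thm6} explains, the author chose this proof precisely in the hope of eventually extending Theorem~\ref{thm5} to positive characteristic, which your Riemann-surface computation cannot do.
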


\begin{proof}
Suppose an algebraic curve $X$ is a direct factor of the underlying
 variety of a connected semisimple algebraic group $G$. Then $X$ is irre\-ducible, non-singular, affine, and there is a surjective morphism $\pi \colon G \to X$. Due to rationality
of the underlying variety of $G$
 (see \cite[Sect. 14.14]{Bo2}), the existence of $\pi$ implies unirationality, and hence, by Luroth's theo\-rem, rationality of $X$. Therefore, $X$ is isomorphic to an open subset $U$ of $\mathbb A^1$.
 The case $U = \mathbb A^1$ is impossible due to Theorem \ref{thm3}. If $U \neq \mathbb A^1$, then there is a non-constant invertible regular function on $X$, which is impossible in view of  Theo\-rem\;\ref{thm2}.
\end{proof}

\begin{theorem}\label{thm6}
An algebraic surface cannot be a direct factor of the underlying variety of a
connected semisimple algebraic group.
\end{theorem}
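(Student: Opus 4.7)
My plan is to derive a contradiction at the level of second rational singular cohomology. Assume $X$ is an algebraic surface that is a direct factor of the underlying variety of a connected semisimple algebraic group $G$, so $G\simeq X\times Y$ as algebraic varieties for some variety $Y$; since $G$ is smooth, affine, and connected, so are $X$ and $Y$, and $\dim X=2$.

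First, I would apply Theorem~\ref{thm3} to $X$: this yields $H_2(X,\mathbb{Z})\simeq\mathbb{Z}$, and hence by the universal coefficient theorem $H^2(X,\mathbb{Q})\simeq\mathbb{Q}$. Because $Y$ is connected, the K\"unneth decomposition
$$H^2(G,\mathbb{Q})\simeq H^2(X,\mathbb{Q})\oplus\bigl(H^1(X,\mathbb{Q})\otimes H^1(Y,\mathbb{Q})\bigr)\oplus H^2(Y,\mathbb{Q})$$
then exhibits $H^2(X,\mathbb{Q})\simeq\mathbb{Q}$ as a direct summand of $H^2(G,\mathbb{Q})$.

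It therefore suffices to show $H^2(G,\mathbb{Q})=0$. Passing to the compact real form ${\sf G}$ of $G$ (homotopy equivalent to $G$, as recalled just before Theorem~\ref{thm3}), the Hopf--Samelson theorem identifies $H^*({\sf G},\mathbb{Q})$ with an exterior algebra on primitive generators of odd degrees $2d_1-1,\dots,2d_r-1$, where $r=\mathrm{rank}(G)$ and the $d_i\geq 2$ are the degrees of the fundamental Weyl-group invariants of $G$; every generator thus lies in degree $\geq 3$, so $H^2({\sf G},\mathbb{Q})=H^2(G,\mathbb{Q})=0$, giving the desired contradiction $\mathbb{Q}\hookrightarrow 0$.

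No step is genuinely difficult; the one substantive external input beyond Theorem~\ref{thm3} is the classical vanishing above, which is precisely why the semisimplicity hypothesis (as opposed to mere reductivity) is essential here. In contrast to the proof of Theorem~\ref{thm5}, neither rationality of $X$ nor an appeal to Theorem~\ref{thm2} is needed.
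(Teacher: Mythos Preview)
Your argument is correct and follows essentially the same route as the paper: both proofs combine the K\"unneth decomposition with Theorem~\ref{thm3} on one side and the classical vanishing $H^2({\sf G},\mathbb{Q})=0$ for a semisimple compact form on the other, reaching the same contradiction. The only cosmetic differences are that the paper works in homology (using duality to pass from $H^i$ to $H_i$) and cites the vanishing from \cite[\S9, Thm.~4, Cor.~1]{O} rather than invoking the Hopf--Samelson description of $H^*({\sf G},\mathbb{Q})$ directly.
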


\begin{proof}
Suppose there are a connected semisimple
algebraic gro\-up $G$ and the algebraic varieties $X$ and $Y$ (necessarily irreducible and smooth) such that
$X$ is a  surface, and the product $X \times Y$ is isomorphic to the
underlying variety of the group $G$. We keep the notation of the proof of Theorem \ref{thm3}. Since the group $G$ is semisimple, the group ${\sf G}$ is semisimple too.\;Hence, $ H^1({\sf G}, \mathbb Q) = H^2({\sf G}, \mathbb Q) = 0$ (see
\cite[\S9, Thm. 4, Cor. 1]{O}).
Insofar as
the $\mathbb Q$-vector spaces $H^i({\sf G}, \mathbb Q)$ and $H_i({\sf G}, \mathbb Q)$ are dual to each other, this gives
\begin{equation}\label{zero}
H_1(G, \mathbb Q)=H_2(G, \mathbb Q)=0.
\end{equation}
Since the group $G$ is connected, the topological manifolds $X$ and $Y$ are connected too. Therefore,
\begin{equation}\label{0}
H_0(X,\mathbb Q)=H_0(Y, \mathbb Q)=\mathbb Q.
\end{equation}
From \eqref{HHHH}, \eqref{zero}, and \eqref{0} it follows that
$H_2(X,\mathbb Q)=0$. In view of
\eqref{HHH}, this contradics Theorem \ref{thm3}, which completes the proof.
 \end{proof}

\begin{remark} {\rm Theorem \ref{thm5} can be proved in the same way as
Theo\-rem\;\ref{thm6}.\;Namely, in the proof of the latter one only needs to consider $X$ being a curve,
and then the arguments used in it lead to the equality $H_1(X,\mathbb Q)=0$, which contradicts Theorem \ref{thm3}. The other proof is given in the hope that
it raises the chances of carrying over Theorem \ref{thm5} to positive characteristic.}
\end{remark}

\noindent {\bf  4.\;Group properties determined by properties of underlying va\-riety.}
Theorem \ref{thm1}
naturally leads to the question as to what extent the underlying variety of an algebraic group determines its group struc\-ture.

Explicitly or implicitly, this question has long been considered in the literature.

For example, M. Lazar proved in \cite{L} that if the underlying variety of an algebraic group is isomorphic to an affine space, then this group is unipotent (for a short proof, see Remark \ref{Laz} below).

By Chevalley's theorem, every connected algebraic group $G$ contains the largest connected affine normal subgroup $G_{\rm aff} $, and the group $G/G_ {\rm aff}$ is an abelian variety.\;M.\;Rosenlicht in \cite{R} considered $G$
such
that $G_{\rm aff}$ is a torus; this property is equivalent to the absence
of connected one-dimensional unipotent subgroups in $G$.
In modern terminology (see \cite[Sect.\;5.4]{Br}), such groups are called
{\it semi-abelian varieties} (M. Rosenlicht called them {\it toroidal}).\;Next Theorem \ref{toroidd}
gives a criterion that the group $G$ is a
semi-abelian variety in terms of
geometric properties of its underlying variety
(the proof does not use the restriction on the characteristic of the field $k$; constraint (b) in Theorem \ref{toroidd} is weaker than the constraint made in \cite[Prop.\;5.4.5]{Br}):

\begin{theorem}[{{\rm semi-abelianness criterion}}]\label{toroidd}
The following properties of a connected algebraic group $G$ are equivalent:
\begin{enumerate}[\hskip 7.2mm \rm (a)]
\item[$\rm (sa_1)$] $G$ is toroidal;
\item[$\rm (sa_2)$] $G$ does not contain subvarieties isomorphic to ${\mathbb A}^1$.
\end{enumerate}
\end{theorem}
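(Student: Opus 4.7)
The plan is to prove the two implications via the Chevalley decomposition
\[
1\longrightarrow G_{\rm aff}\longrightarrow G\longrightarrow A\longrightarrow 1,
\]
with $A$ an abelian variety, by analyzing morphisms from $\mathbb{A}^1$ into the affine and abelian parts separately.

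For (sa$_1$)$\Rightarrow$(sa$_2$), I would argue by contradiction. Suppose $\iota\colon\mathbb{A}^1\hookrightarrow G$ is a closed embedding. Composing with the quotient $\pi\colon G\to A$ gives a morphism $\mathbb{A}^1\to A$; this extends to $\mathbb{P}^1\to A$ (rational maps from smooth curves to abelian varieties are morphisms) and every morphism $\mathbb{P}^1\to A$ is constant, so $\pi\circ\iota$ is constant. Hence $\iota(\mathbb{A}^1)$ is contained in a single coset $gG_{\rm aff}$, and after left translation by $g^{-1}$ I may assume $\iota(\mathbb{A}^1)\subseteq G_{\rm aff}$. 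If $G_{\rm aff}$ is a torus $T\simeq\mathbb{G}_m^r$, each coordinate of $\iota$ is then a nowhere-vanishing regular function on $\mathbb{A}^1$, hence a nonzero scalar. So $\iota$ is constant, contradicting that it is an embedding.

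For (sa$_2$)$\Rightarrow$(sa$_1$), I would prove the contrapositive: if $G$ is not toroidal, then $G_{\rm aff}$ is connected affine but not a torus, and I exhibit a closed subgroup of $G_{\rm aff}$ isomorphic to $\mathbb{G}_a$. If ${\mathscr R}_u(G_{\rm aff})\neq\{e\}$, then a non-trivial connected unipotent group always contains a closed subgroup isomorphic to $\mathbb{G}_a$; otherwise $G_{\rm aff}$ is reductive and non-toric, so it has a non-trivial root system and any root subgroup $U_\alpha\simeq\mathbb{G}_a$ works. Either way, $\mathbb{G}_a\simeq\mathbb{A}^1$ embeds into $G_{\rm aff}\subseteq G$ as a closed subvariety.

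The main obstacle lies in (sa$_1$)$\Rightarrow$(sa$_2$): one must control arbitrary subvarieties of $G$, not just subgroups, and the reduction to $G_{\rm aff}$ rests on the rigidity fact that every morphism from $\mathbb{P}^1$ to an abelian variety is constant, combined with the elementary observation that the units of $k[t]$ are the nonzero scalars. Neither of these steps uses any hypothesis on $\mathrm{char}(k)$, so the proof works uniformly, matching the remark that the characteristic is unrestricted. The direction (sa$_2$)$\Rightarrow$(sa$_1$) is essentially structural and should be routine.
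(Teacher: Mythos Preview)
Your argument is correct and follows the same overall architecture as the paper: reduce via Chevalley's theorem to the affine part $G_{\rm aff}$, then handle the torus case by the observation that $k[\mathbb A^1]^\times=k^\times$, and the converse by producing a $\mathbb G_a$ inside a non-toral connected affine group (unipotent radical or root subgroup).

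The one genuine difference is in the reduction step showing that any $\mathbb A^1\hookrightarrow G$ lands in a coset of $G_{\rm aff}$. You extend $\mathbb A^1\to A$ to $\mathbb P^1\to A$ by properness and invoke the classical fact that abelian varieties contain no rational curves. The paper instead translates so that $e\in X$, puts a $\mathbb G_a$-structure on $X$, and applies Rosenlicht's theorem \cite[Thm.\,3]{R} to conclude that $\pi|_X$ is a group homomorphism, hence trivial since $X$ is affine and $A$ is complete. Both arguments are characteristic-free; yours is slightly more self-contained (avoiding the Rosenlicht rigidity lemma), while the paper's keeps the proof within the circle of ideas it is already using elsewhere. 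For $(\mathrm{sa}_2)\Rightarrow(\mathrm{sa}_1)$ the paper cites Grothendieck's result that $\mathscr R_u(G_{\rm aff})\cong\mathbb A^d$ rather than the existence of a $\mathbb G_a$-subgroup, but this is a cosmetic difference.
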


\begin{proof}
Let $\pi \colon G \to G / G_ {\rm aff}$ be the natural epimorphism,
and let $X$ be a subvariety of $G$ isomorphic to $\mathbb A^1$. Shifting it by an appropriate element of the group $G$, we can assume that the unit element $e$ of the group $G$ lies in $X$. Since the variety $X$ is isomorphic to the underlying variety of the group $\mathbb G_a$, we can endow the variety $X$ with a structure of an algebraic group isomorphic to group $\mathbb G_a$ with the unit element $e$.
Then $\pi|_X\colon X\to G/G_{\rm aff}$ is
a homomorphism of algebraic groups in view of
\cite[Thm.\,3]{R}. Since $X$ is an affine and $G/G_{\rm aff}$ is a complete algebraic variety, this yields $X\subseteq G_{\rm aff}$. Therefore, the matter comes down to proving
the equivalence of the following properties:
\begin{enumerate}[\hskip 6.2mm\rm(a')]
\item[$\rm (sa_1')$] $G_{\rm aff}$ is a torus;
\item[$\rm (sa_2')$] $G_{\rm aff}$ does not contain subvarieties isomorphic to ${\mathbb A}^1$.
\end{enumerate}

$\rm (sa_1')
\Rightarrow
\rm (sa_2')$: Let the subvariety $X$ of the torus $G_ {\rm aff}$ be isomorphic to ${\mathbb A}^1$. The algebra of regular functions on $G_ {\rm aff} $ is generated by invertible functions.\;This means that this is also the case for the algebra of regular functions on $X$.\;This contradicts the fact that there are no non-constant invertible regular functions on
$\mathbb A^1$.

$\rm (sa_2')
\Rightarrow
\rm (sa_1')$: If $\rm (sa_2')$ holds, then
$G_ {\rm aff}$ is reductive, since the variety
$\mathscr R_u (G_{\rm aff})$ is isomorphic to $\mathbb A^d$ (see \cite[p.\,5-02, Cor.]{G}), which for $d>0$ contains affine lines. In addition, ${\mathscr D}(G_{\rm aff})\!=\!\{e\}$, because
root subgroups in a semisimple group are isomorphic to ${\mathbb G}_a$, whose underlying variety is isomorphic to $\mathbb A^1$. Hence,
$G_{\rm aff}$ is a torus.
\end{proof}

\begin{cornonumber}
The following properties of
a connected algebraic group
 $G$ are equivalent:
\begin{enumerate}[\hskip 3.0mm\rm(a)]
\item in the underlying variety of the group $G$, there are no subvarieties isomorphic to ${\mathbb A}^1$;
\item in the group $G$, there are no algebraic subgroups isomorphic to\;${\mathbb G}_a$.
\end{enumerate}
\end{cornonumber}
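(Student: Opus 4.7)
The plan is to derive both implications from Theorem \ref{toroidd}, since the corollary is essentially a reformulation of the semi-abelianness criterion in terms of subgroups rather than subvarieties.

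The direction (a)$\Rightarrow$(b) is immediate: the underlying variety of ${\mathbb G}_a$ is isomorphic to $\mathbb A^1$, so any algebraic subgroup of $G$ isomorphic to ${\mathbb G}_a$ is, in particular, a subvariety of $G$ isomorphic to ${\mathbb A}^1$, and the absence of the latter forces the absence of the former.

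For the converse (b)$\Rightarrow$(a), I would argue contrapositively. Suppose $G$ contains a subvariety isomorphic to ${\mathbb A}^1$; then by Theorem \ref{toroidd}, $G$ is not toroidal, so $G_{\rm aff}$ is not a torus. It remains to exhibit, inside such a $G$, an algebraic subgroup isomorphic to ${\mathbb G}_a$, and this is supplied by the same structural observations used in the proof of $\rm (sa_2')\Rightarrow\rm (sa_1')$ of Theorem \ref{toroidd}: if ${\mathscr R}_u(G_{\rm aff})$ is nontrivial, then, being a positive-dimensional connected unipotent group, it contains a (central) subgroup isomorphic to ${\mathbb G}_a$; if instead $G_{\rm aff}$ is reductive with nontrivial derived subgroup ${\mathscr D}(G_{\rm aff})$, then ${\mathscr D}(G_{\rm aff})$ is connected semisimple and hence contains a root subgroup isomorphic to ${\mathbb G}_a$. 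Either way, $G$ acquires a ${\mathbb G}_a$-subgroup, contradicting (b).

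There is no substantive obstacle here: Theorem \ref{toroidd} already performs the geometric-to-structural translation from ${\mathbb A}^1$-subvarieties to the structure of $G_{\rm aff}$, and the existence of a ${\mathbb G}_a$-subgroup in every non-torus connected affine algebraic group is standard structure theory.
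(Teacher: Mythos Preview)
Your argument is correct and follows essentially the same route as the paper: both pivot on the equivalence of (a) with toroidality from Theorem \ref{toroidd}, and then link toroidality to (b). The only difference is that the paper dispatches the equivalence of (b) with toroidality by a direct citation to \cite[Prop.]{R}, whereas you reprove the needed direction inline, recycling the structural observations (nontrivial $\mathscr{R}_u(G_{\rm aff})$ or nontrivial $\mathscr{D}(G_{\rm aff})$ yields a $\mathbb{G}_a$) already used in the proof of $\rm (sa_2')\Rightarrow\rm (sa_1')$.
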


\begin{proof}
According to \cite[Prop.]{R}, property (b) is equivalent to $G$ being
semi-abelian variety.
Therefore, the claim follows from Theorem \ref{toroidd}.
\end{proof}

Below is listed a series of group properties of connected affine algeb\-raic groups determined by the properties of their underlying varieties. In the formulations of the corresponding statements,
the following numerical invariants of underlying varieties are used.

Let $X$ be an irreducible algebraic variety.\;The multiplicative group $k[X]^\times$ of invertible
regular functions on $X$ contains the subgroup of non-zero constants $k^\times$, and the quotient
$k[X]^\times/k^\times$
is a  free abelian group of a finite rank (see \cite[Thm. 1]{R}).
Let us denote
 \begin{equation}\label{units}
{\rm units}(X):={\rm rank}(k[X]^\times /k^\times).
 \end{equation}
According to \cite[Thms. 2, 3]{R}, this invariant
has the following proper\-ties:

\begin{enumerate}[\hskip 2.0mm\rm(i)]
\item
If $X$ and $Y$ are irreducible algebraic varieties, then
\begin{equation}\label{times}
{\rm units}(X\times Y)={\rm units}(X)+{\rm units}(Y).
\end{equation}
\item
If $G$ is a connected algebraic group, then
\begin{equation}\label{Hom}
{\rm units}(G)={\rm rank}\big({\rm Hom}(G, \mathbb G_m)\big).
\end{equation}
\end{enumerate}

\begin{lemma}\label{inequ} Let $G$ be a connected algebraic group.
Then
\begin{enumerate}[\hskip 4.2mm\rm(i)]
\item ${\rm units}(G)\leqslant \dim(G)$;
\item the equality in {\rm(i)}
is equivalent to the property that
$G$ is a torus.
\end{enumerate}
\end{lemma}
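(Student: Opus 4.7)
The plan is to use the identification ${\rm units}(G)={\rm rank}\bigl({\rm Hom}(G,\mathbb G_m)\bigr)$ from \eqref{Hom}, so that both assertions become statements about the character lattice $X(G):={\rm Hom}(G,\mathbb G_m)$, which is free of finite rank.

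For (i), set $n:={\rm units}(G)$, choose a $\mathbb Z$-basis $\chi_1,\dots,\chi_n$ of $X(G)$, and consider the homomorphism of algebraic groups
\[
\phi:=(\chi_1,\dots,\chi_n)\colon G\longrightarrow \mathbb G_m^n.
\]
Its image $T':=\phi(G)$ is a closed connected subgroup of the torus $\mathbb G_m^n$, hence itself a subtorus. Pullback of characters along the surjective homomorphism $G\twoheadrightarrow T'$ is injective, and it carries the restrictions to $T'$ of the coordinate characters of $\mathbb G_m^n$ to $\chi_1,\dots,\chi_n$; these restrictions therefore span a sublattice of $X(T')$ of rank $n$. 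Hence $n\leqslant {\rm rank}\,X(T')=\dim T'\leqslant\dim G$, which is (i).

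For (ii), one direction is immediate: if $G$ is a torus, then ${\rm rank}\,X(G)=\dim G$. For the converse, assume $n=\dim G$; then the chain above forces $\dim T'=\dim G$, so $\phi$ is surjective with finite kernel $K$. I would first deduce that $G$ is affine: by Chevalley's theorem $G$ fits into an extension $1\to G_{\rm aff}\to G\to A\to 1$ with $A$ an abelian variety, and the closed connected subgroup $\phi(G_{\rm aff})\subseteq T'$ is a subtorus; the induced morphism $A\to T'/\phi(G_{\rm aff})$ from a complete group to an affine one is constant, so $\phi(G_{\rm aff})=T'$, whence $\dim G_{\rm aff}=\dim T'=\dim G$ and $G=G_{\rm aff}$. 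Next, the finite normal subgroup $K$ of the connected group $G$ is central, so $[G,G]\subseteq K$ is finite and connected, hence trivial, and $G$ is commutative. The structure theorem for connected commutative affine algebraic groups then yields $G\simeq G_s\times G_u$ with $G_s$ a torus and $G_u$ unipotent; since any homomorphism from a unipotent group to a torus is trivial, $G_u$ lies in the finite group $K$ while being connected, so $G_u=\{e\}$ and $G=G_s$ is a torus.

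I do not anticipate a real obstacle: the key ingredients---injectivity of pullback on characters along surjective homomorphisms of algebraic groups, Chevalley's structure theorem, and the decomposition of connected commutative affine groups into their torus and unipotent parts---are standard. The main point is to set up the auxiliary morphism $\phi$ so that the equality case of (i) automatically produces an isogeny $G\to T'$ onto a torus, from which the remaining structural reductions force $G$ itself to be a torus.
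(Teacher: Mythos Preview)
Your argument is correct, and it follows a genuinely different route from the paper's.

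The paper proceeds by two successive reductions that keep ${\rm units}(G)$ unchanged while not increasing $\dim(G)$: first it replaces $G$ by $G/D$, where $D$ is Rosenlicht's smallest normal subgroup with affine quotient (so one may assume $G$ affine), and then by $G/\mathscr{R}_u(G)$ (so one may assume $G$ reductive). For a connected reductive $G$ with maximal torus $T$, the density of $\bigcup_{g} gTg^{-1}$ in $G$ makes the restriction ${\rm Hom}(G,\mathbb G_m)\hookrightarrow{\rm Hom}(T,\mathbb G_m)$ injective, giving the chain ${\rm units}(G)\leqslant\dim(T)\leqslant\dim(G)$; equality then forces $T=G$ at the reductive level, and tracing back through the reductions yields (ii).

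Your approach avoids both reductions and the density-of-tori argument by packaging a $\mathbb Z$-basis of $X(G)$ into a single homomorphism $\phi\colon G\to\mathbb G_m^n$. This gives (i) directly via $\dim\phi(G)$, and in the equality case turns $\phi$ into an isogeny onto a torus; affineness of $G$ then comes from Chevalley's theorem, commutativity from centrality of finite normal subgroups in connected groups, and toricity from the splitting $G_s\times G_u$ of connected commutative affine groups. The trade-off: the paper's route is shorter and stays within the standard reductive toolkit, while yours is more self-contained (no need for density of semisimple elements or the anti-affine kernel) and makes the isogeny to a torus explicit, at the cost of invoking Chevalley's structure theorem and the $G_s\times G_u$ decomposition. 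Both work over an algebraically closed field of arbitrary characteristic.
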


\begin{proof} By \cite[Cor.\;5 of Thm.\;16]{Ros}, the kernel of every character of the group $G$
contains the smallest normal algebraic subgroup  $D$ of $G$ such that the group $G/D$ is affine. In view of this and \eqref{Hom}, in what follows we can (and will) assume that $G$ is affine.\;Similarly, since ${\mathscr R}_u(G)$ lies in the kernel of every character of $G$, we can (and will) assume that $G$ is reductive.\;Let $T$ be a maximal torus of  $G$.\;Since the set $\bigcup_{g\in G}gTg^{-1}$ is dense in  $G$ (see \cite[Sect.\;12.1, Thm.(a),(b) and Sect. 13.17, Cor.\,2(c)]{Bo2}), the restriciton of characters of $G$ to $T$ is a group embedding ${\rm Hom}(G,
\mathbb G_m)\hookrightarrow {\rm Hom}(T,
\mathbb G_m)$; whence, in view of \eqref{Hom} and \cite[Sect.\,8.5, Prop.]{Bo2}, we get ${\rm units}(G)\leqslant
{\rm units}(T)=\dim(T)\leqslant \dim(G)$. This completes the proof.
\end{proof}

In what follows, we use the following notation:
\begin{equation}\label{defmu}
{\rm mh}(X):=\max\{d\in \mathbb Z_{\geqslant 0}\mid H_d(X, \mathbb Q)\neq 0\}.
\end{equation}
If $X$ is a non-singular affine algebraic variety, then, according to
 \cite[Thm. 7.1]{M},
$${\rm mh}(X)\leqslant \dim(X).$$

\begin{theorem}\label{ddi} If $G$ is a connected affine algebraic group, then
\begin{align}\label{ddm}
\dim ({\mathscr R}_u(G))&=\dim(G)-{\rm mh}(G),\\
\label{rddm}
\dim ({\mathscr R}(G))&=\dim(G)-{\rm mh}(G)+{\rm units}(G).
\end{align}
\end{theorem}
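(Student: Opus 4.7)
Since we are working under the standing hypothesis $k=\mathbb{C}$, the plan is to reduce everything to the reductive quotient $L:=G/\mathscr{R}_u(G)$ via the Levi decomposition. First I would write $G=\mathscr{R}_u(G)\rtimes L$, so that as algebraic varieties $G\simeq \mathscr{R}_u(G)\times L$. Setting $d:=\dim\mathscr{R}_u(G)$, the result of Grothendieck already invoked in the proof of Theorem \ref{toroidd} identifies the variety $\mathscr{R}_u(G)$ with $\mathbb{A}^d$. Topologically $\mathscr{R}_u(G)$ is then homeomorphic to $\mathbb{C}^d$ and in particular contractible, so $G$ and $L$ are homotopy equivalent and share rational (co)homology.

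With this reduction in hand, formula \eqref{ddm} follows quickly. Choosing a compact real form $\mathsf{L}$ of $L$ and using the homotopy equivalence $L\sim \mathsf{L}$ already recorded in the excerpt, I would observe that $\mathsf{L}$ is a closed connected orientable real manifold of dimension $\dim L$, so $H_{\dim L}(\mathsf{L},\mathbb{Q})\simeq \mathbb{Q}$ while $H_i(\mathsf{L},\mathbb{Q})=0$ for $i>\dim L$. Hence $\mathrm{mh}(G)=\mathrm{mh}(L)=\dim L=\dim G-d$, which is exactly \eqref{ddm}.

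For \eqref{rddm}, the key observation is that $\mathscr{R}(G)/\mathscr{R}_u(G)$ is the radical of the reductive group $L$, namely the central torus $\mathscr{C}(L)^\circ$; this gives $\dim\mathscr{R}(G)=\dim\mathscr{R}_u(G)+\dim\mathscr{C}(L)^\circ$. It then suffices to identify $\mathrm{units}(G)$ with $\dim\mathscr{C}(L)^\circ$. Reprising the argument of Lemma \ref{inequ}, any character of $G$ is trivial on $\mathscr{R}_u(G)$ (which is unipotent) and on $\mathscr{D}(G)$ (since $\mathbb{G}_m$ is abelian), hence factors through the torus $L/\mathscr{D}(L)$, whose dimension equals $\dim\mathscr{C}(L)^\circ$. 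In view of \eqref{Hom} this gives $\mathrm{units}(G)=\dim\mathscr{C}(L)^\circ$; substituting into the previous displayed formula and using \eqref{ddm} yields \eqref{rddm}.

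The main structural input is the variety-level splitting $G\simeq \mathscr{R}_u(G)\times L$, which in characteristic zero is provided by the Mostow--Levi decomposition together with Grothendieck's description of the underlying variety of a unipotent group; no genuine obstacle arises beyond this point, since the remaining steps are homological and character-theoretic bookkeeping already deployed earlier in the paper.
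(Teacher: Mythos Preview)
Your proof is correct and follows essentially the same route as the paper's: reduce to the reductive quotient $L=G/\mathscr{R}_u(G)$ via the contractibility of $\mathscr{R}_u(G)\simeq\mathbb{A}^d$, compute $\mathrm{mh}(L)=\dim L$ through the compact real form, and identify $\mathrm{units}(G)$ with the dimension of the central torus of $L$. The only cosmetic differences are that the paper packages the compact-form computation as an appeal to Theorem~\ref{thm3}, and for \eqref{rddm} it works with a maximal torus $T\subset\mathscr{R}(G)$ and its image $\pi(T)=\mathscr{C}(L)^\circ$ rather than invoking $\mathscr{R}(G)/\mathscr{R}_u(G)=\mathscr{C}(L)^\circ$ directly.
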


\begin{proof} By \cite[p.\,5-02, Cor.]{G}, the underlying variety of the group ${\mathscr R}_u(G) $ is isomorphic to an affine space.\;Therefore, the underlying varieties of the groups
 $G$ and $R: = G/{\mathscr R}_u (G)$, considered as topological manifolds, are homotopy equivalent. Therefore, $H_i (G, \mathbb Q) \simeq H_i (R, \mathbb Q)$ for every $i$, and hence
\begin{equation}\label{muGL}
{\rm mh}(G)={\rm mh}(R).
\end{equation}
Since the group $R$ is reductive, it follows from \eqref{HHH} and Theorem \ref{thm3} that \begin{equation}\label{mdL}
{\rm mh}(R)=\dim(R).
\end{equation}
In view of $\dim(R)=\dim(G)-\dim ({\mathscr R}_u(G))$,  equalities
\eqref{muGL} and \eqref{mdL} imply \eqref{ddm}.

The group ${\mathscr R}(G)$ is a semi-direct product of its maximal torus  $T$ and the group ${\mathscr R}_u(G)$ (see\,\cite[Sect.\,10.6, Thm.]{Bo2}), so
\begin{equation}\label{sspr}
\dim ({\mathscr R}(G))=\dim(T)+\dim({\mathscr R}_u(G)).
\end{equation}
Let $\pi\colon G\to G/{\mathscr R}_u(G)$ be the canonical projection. Then
(see \cite[Sect.\,11.21]{Bo2})
\begin{equation}\label{centto}
\pi(T)=\pi({\mathscr R}(G))={\mathscr C}(G/{\mathscr R}_u(G))^{\circ}.
\end{equation}
Since the group $G/{\mathscr R}_u(G)$ is reductive, it follows from \eqref{Hom} and \eqref{centto}
that
\begin{equation}\label{ravv}
\begin{split}
{\rm units}(G)&={\rm rank}\big({\rm Hom}(G, \mathbb G_m)\big)\\
&=
{\rm rank}\big({\rm Hom}(G/{\mathscr R}_u(G), \mathbb G_m)\big)\\
&=
\dim\big({\mathscr C}(G/{\mathscr R}_u(G))^{\circ}\big)\\
&=
\dim\big(\pi(T)\big)=\dim(T).
\end{split}
\end{equation}
Now equality \eqref{rddm} follows from \eqref{ddm}, \eqref{sspr}, and \eqref{ravv}.
\end{proof}

Since reductivity (respectively, semisimplicity) of a connected affine algebraic group is equivalent to the triviality of its unipotent radical (respectively, radical), Theorem \ref{ddi} gives the following criteria for re\-duc\-ti\-vity and semisimplicity in terms of the geometric properties of the underlying variety:

\begin{theorem}[{{\rm reductivity criterion}}]\label{crre}
The following properties of a con\-nec\-ted affine algebraic group $G$ are equivalent:
\begin{enumerate}[\hskip 9.2mm \rm(a)]
\item[$\rm(red_1)$] $G$ is reductive;
\item[$\rm(red_2)$] $\dim(G)={\rm mh}(G)$.
\end{enumerate}

If these properties hold, then
$\dim(\mathscr C(G))={\rm units}(G)$.
\end{theorem}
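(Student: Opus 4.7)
The plan is to derive this criterion as an immediate consequence of Theorem \ref{ddi}, which has already been established. Since reductivity of a connected affine algebraic group $G$ is by definition equivalent to the vanishing of its unipotent radical, i.e., to $\dim(\mathscr R_u(G)) = 0$, the equivalence $(\text{red}_1) \Leftrightarrow (\text{red}_2)$ should follow at once from equality \eqref{ddm}, which asserts that $\dim(\mathscr R_u(G)) = \dim(G) - \mathrm{mh}(G)$. So the first half of the proof is just a one-line substitution.

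For the supplementary assertion, I would argue as follows. Assume $G$ is reductive. Then the radical $\mathscr R(G)$ coincides with the connected component of the center $\mathscr C(G)^\circ$ and is a torus (this is the standard structure result used already in Section 2). In particular $\dim(\mathscr C(G)) = \dim(\mathscr C(G)^\circ) = \dim(\mathscr R(G))$. Plugging $\mathrm{mh}(G) = \dim(G)$ into equality \eqref{rddm} of Theorem \ref{ddi} gives
\begin{equation*}
\dim(\mathscr R(G)) = \dim(G) - \mathrm{mh}(G) + \mathrm{units}(G) = \mathrm{units}(G),
\end{equation*}
and combining the two identities yields $\dim(\mathscr C(G)) = \mathrm{units}(G)$, as required.

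There is no real obstacle here: the theorem is essentially a repackaging of Theorem \ref{ddi} via the defining property of reductivity. The only subtlety worth flagging in the writeup is making sure the reader sees that $\dim(\mathscr C(G))$ and $\dim(\mathscr C(G)^\circ)$ agree (since $\mathscr C(G)$ need not be connected even when $G$ is reductive, as the example $G = \mathrm{SL}_n$ shows, but dimensions of an algebraic group and of its identity component coincide), and that the identification $\mathscr R(G) = \mathscr C(G)^\circ$ for reductive $G$ is being invoked.
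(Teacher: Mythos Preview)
Your proposal is correct and follows essentially the same approach as the paper. The paper's own proof is a terse two-liner: the equivalence $(\mathrm{red}_1)\Leftrightarrow(\mathrm{red}_2)$ is said to follow from \eqref{ddm}, exactly as you argue, and the supplementary claim $\dim(\mathscr C(G))=\mathrm{units}(G)$ is derived from \eqref{centto} and \eqref{ravv} (the ingredients used in Theorem~\ref{ddi} to prove \eqref{rddm}), whereas you cite the packaged formula \eqref{rddm} itself together with the identification $\mathscr R(G)=\mathscr C(G)^\circ$---a cosmetic difference only.
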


\begin{proof} The first claim follows from \eqref{ddm}, and the second from \eqref{centto} and \eqref{ravv}.
\end{proof}

\begin{theorem}[{{\rm semisimplicity criterion}}]
The following properties of a con\-nec\-ted affine algebraic group $G$ are equivalent:
\begin{enumerate}[\hskip 7.2mm \rm(a)]
\item[$\rm(ss_1)$] $G$ is semisimple;
\item[$\rm(ss_2)$]
$\dim(G)={\rm mh}(G)-{\rm units}(G)$;
\item[$\rm(ss_3)$] $\dim(G)={\rm mh}(G)$ and ${\rm units}(G)=0$.
\end{enumerate}
\end{theorem}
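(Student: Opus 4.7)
The criterion is a direct consequence of the formulas in Theorem \ref{ddi}, together with the reductivity criterion of Theorem \ref{crre}; there is no real obstacle, only the bookkeeping of three implications. My plan is to circulate $(ss_1) \Rightarrow (ss_3) \Rightarrow (ss_2) \Rightarrow (ss_1)$, exploiting that semisimplicity of $G$ is by definition the vanishing of $\dim(\mathscr R(G))$.

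First I would observe that the equivalence $(ss_1) \Leftrightarrow (ss_2)$ is essentially a rewriting of \eqref{rddm}. Indeed, from $\dim(\mathscr R(G)) = \dim(G) - {\rm mh}(G) + {\rm units}(G)$ one sees that $\dim(\mathscr R(G)) = 0$ is equivalent to $\dim(G) = {\rm mh}(G) - {\rm units}(G)$, so $G$ is semisimple iff $(ss_2)$ holds.

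Next I would establish $(ss_1) \Rightarrow (ss_3)$. If $G$ is semisimple, then it is reductive, so by Theorem \ref{crre} one has $\dim(G) = {\rm mh}(G)$. Moreover, a connected semisimple group has no nontrivial characters (as used in the proof of Theorem \ref{thm2}); hence ${\rm Hom}(G, \mathbb G_m) = 0$, so by \eqref{Hom}, ${\rm units}(G) = 0$. Both conditions of $(ss_3)$ are thereby verified.

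Finally $(ss_3) \Rightarrow (ss_2)$ is immediate, since $\dim(G) = {\rm mh}(G)$ and ${\rm units}(G) = 0$ together give $\dim(G) = {\rm mh}(G) - {\rm units}(G)$. This closes the cycle, and no step requires more than a substitution into the formulas already proved, so the proof is essentially a short calculation on top of Theorems \ref{ddi} and \ref{crre}.
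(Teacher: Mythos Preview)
Your proof is correct and follows essentially the same route as the paper: the equivalence $(ss_1)\Leftrightarrow(ss_2)$ is read off directly from \eqref{rddm}, the implication $(ss_1)\Rightarrow(ss_3)$ comes from Theorem~\ref{crre} plus the fact that a connected semisimple group has trivial character group (the paper phrases this via finiteness of the center and the second claim of Theorem~\ref{crre}, but the content is the same), and $(ss_3)\Rightarrow(ss_2)$ is a trivial substitution.
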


\begin{proof}
$\rm(ss_1)\Leftrightarrow\rm(ss_2)$
follows from \eqref{rddm}.
In view of reductivity of
  semi\-simple groups and finiteness of their centers,
  from Theorem \ref{crre} it follows that
$\rm(ss_1)\Rightarrow\rm(ss_3)$.
Clearly,
$\rm(ss_3)\Rightarrow\rm(ss_2)$.
\end{proof}

The following Theorem \ref{lem8} generalizes M. Lazar's theorem \cite{L} to the case of connected solvable affine algebraic groups and shows that solvability of a connected affine algebraic group also can be charac\-te\-rized in terms of the geometric properties of its
underlying variety.

\begin{theorem}[{{\rm solvability criterion}}]\label{lem8}
The following properties of a con\-nec\-ted affine algebraic group $S$ are equivalent:
\begin{enumerate}[\hskip 9.2mm\rm (a)]
\item[$\rm(sol_1)$] $S$ is solvable;
\item[$\rm(sol_2)$] ${\rm mh}(S)={\rm units}(S)$;
\item[$\rm(sol_3)$] there are nonnegative integers
 $t$ and $r$ such that the underlying variety of the group
$S$ is isomorphic to
${\mathbb A}_*^t\times {\mathbb A}^r$, and in this case, neces\-sarily $t={\rm units}(S)$.
\end{enumerate}

If these properties hold, then the dimension of maximal tori of the group
  $S$ is equal to
${\rm units}(S)$, and the following equality holds
\begin{equation}\label{formu}
\dim({\mathscr R}_u(S))=\dim(S)-{\rm units}(S).
\end{equation}
\end{theorem}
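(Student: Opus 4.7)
The plan is to prove the cycle $\rm(sol_1)\Rightarrow\rm(sol_3)\Rightarrow\rm(sol_2)\Rightarrow\rm(sol_1)$ and then read off the trailing equalities. Most of the machinery is already in place: Theorem~\ref{ddi} translates the dimensions of $\mathscr{R}(S)$ and $\mathscr{R}_u(S)$ into the invariants ${\rm mh}$ and ${\rm units}$, so once the structure theorem for connected solvable affine algebraic groups, Grothendieck's theorem on underlying varieties of connected unipotent groups, and Rosenlicht's additivity \eqref{times} are brought in, what remains is essentially bookkeeping.

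For $\rm(sol_1)\Rightarrow\rm(sol_3)$, I would pick a maximal torus $T$ of $S$ and invoke \cite[Sect.\,10.6, Thm.]{Bo2} to write $S = T \ltimes \mathscr{R}_u(S)$; the underlying variety is then simply $T \times \mathscr{R}_u(S)$, and since $T$ has underlying variety $\mathbb{A}_*^{\dim T}$ while $\mathscr{R}_u(S)$ is isomorphic as a variety to an affine space $\mathbb{A}^r$ by \cite[p.\,5-02, Cor.]{G}, one obtains the desired product shape with $t = \dim T$. For $\rm(sol_3)\Rightarrow\rm(sol_2)$, the elementary computations ${\rm units}(\mathbb{A}_*^1) = 1$ and ${\rm units}(\mathbb{A}^1) = 0$, combined with additivity \eqref{times}, yield ${\rm units}(\mathbb{A}_*^t \times \mathbb{A}^r) = t$; this also forces the value $t = {\rm units}(S)$ asserted in $\rm(sol_3)$. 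The K\"unneth formula, applied to the fact that $\mathbb{A}_*^t \times \mathbb{A}^r$ is homotopy equivalent to $(S^1)^t$, gives ${\rm mh}(\mathbb{A}_*^t \times \mathbb{A}^r) = t$, whence ${\rm mh}(S) = {\rm units}(S)$.

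The implication $\rm(sol_2)\Rightarrow\rm(sol_1)$ is where Theorem~\ref{ddi} does real work: subtracting \eqref{ddm} from \eqref{rddm} yields the clean identity
\begin{equation*}
{\rm mh}(S) - {\rm units}(S) \;=\; \dim(S) - \dim(\mathscr{R}(S)),
\end{equation*}
so $\rm(sol_2)$ forces $\dim(\mathscr{R}(S)) = \dim(S)$ and hence, by connectedness, $\mathscr{R}(S) = S$, i.e., $S$ is solvable. Under these equivalent conditions $\mathscr{R}(S) = S$, so the character-theoretic computation from the proof of Theorem~\ref{ddi} gives $\dim T = {\rm units}(S)$ for $T$ a maximal torus of $S$, and then \eqref{ddm} together with ${\rm mh}(S) = {\rm units}(S)$ delivers \eqref{formu}. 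The only point I expect to require care is this last chain of identifications---making sure ${\rm mh}$ and ${\rm units}$ are being read off $S$ itself rather than a reductive quotient; once that is pinned down, no serious obstacle remains.
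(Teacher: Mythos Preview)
Your argument is correct and covers the same ground as the paper's, organized a little differently. The paper establishes $\rm(sol_1)\Leftrightarrow\rm(sol_2)$ directly by passing to the reductive quotient $G=S/\mathscr{R}_u(S)$ and showing ${\rm mh}(S)=\dim G$ while ${\rm units}(S)=\dim\mathscr{C}(G)^\circ$; it then cites Grothendieck's corollary outright for $\rm(sol_1)\Rightarrow\rm(sol_3)$ and computes $\rm(sol_3)\Rightarrow\rm(sol_2)$ exactly as you do. Your cycle is slightly more economical at the key step: for $\rm(sol_2)\Rightarrow\rm(sol_1)$ you appeal to Theorem~\ref{ddi} as a black box---note that the identity you use is just a rearrangement of \eqref{rddm}, so the phrase ``subtracting \eqref{ddm} from \eqref{rddm}'' is a harmless slip---whereas the paper in effect reproves that piece of Theorem~\ref{ddi} from scratch. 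The trailing equalities are obtained the same way in both.
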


\begin{proof}
$\rm(sol_1)\Leftrightarrow\rm(sol_2)$:
Let $G:=S/{\mathscr R}_u(S)$; it is a connected reductive algebraic group.\;We shall use the same notation as in the proof of Theorem \ref{thm1}.\;Solvability of the group $S$  is equivalent to the equality $G=Z$, whence, in view of connectedness of the groups $G$ and $Z$, it follows that
\begin{equation}\label{c-ri}
\mbox{$S$ is solvable} \iff \dim (G)=\dim (Z).
\end{equation}
Given
\eqref{ddm} and \eqref{muGL},
we have
\begin{equation}\label{muG}
\dim(G)={\rm mh}(S).
\end{equation}

The elements of the group
 ${\rm Hom}(S, \mathbb G_m)$ (respectively, ${\rm Hom}(G, \mathbb G_m)$) are trivial on the group ${\mathscr R}_u(S)$ (respectively, $D$). It follows follows this and
  \eqref{pro}(b)
that
\begin{equation}\label{1-5}
\begin{split}
{\rm Hom}(S, \mathbb G_m)
&
\simeq
{\rm Hom}(G, \mathbb G_m),
\\
{\rm Hom}(G, \mathbb G_m)
&
\simeq {\rm Hom}(Z/(Z\cap D), \mathbb G_m).
\end{split}
\end{equation}
From \eqref{Hom}, \eqref{1-5}, and \eqref{pro}(c)
we obtain
\begin{equation}\label{18}
\begin{split}
{\rm units}(S)
&={\rm rank}\big({\rm Hom}(Z/(Z\cap D), \mathbb G_m)\big)\\
&=\dim(Z/(Z\cap D))=\dim (Z).
\end{split}
\end{equation}
Matching \eqref{c-ri}, \eqref{muG}, and \eqref{18} completes the proof
the equivalence
$\rm(sol_1)\Leftrightarrow\rm(sol_2)$.

$\rm(sol_1)\Rightarrow\rm(sol_3)$:
This is proven in \cite[p.\,5-02, Cor.]{G} for the field $k$ of any characteristic.

$\rm(sol_3)\Rightarrow\rm(sol_2)$:
Let $\rm(sol_3)$ holds.
It follows from \eqref{times}, \eqref{Hom}, and the evident equality
${\rm units}({\mathbb A}^r)=0$ that
\begin{equation}\label{t}
{\rm units}({\mathbb A}_*^t\times {\mathbb A}^r)=t.
\end{equation}
On the other hand, since the topological manifold $\mathbb A^r$ is contractible,
and ${\mathbb A}_*^t$ is homotopically equivalent to the product  $t$ circles, we have
\begin{equation*}\label{KttK}
H_j({\mathbb A}_*^t\times {\mathbb A}^r, \mathbb Q)=\begin{cases}
\mathbb Q, &\mbox{при $j=t$},\\
0 & \mbox{при $j>t$};
\end{cases}
\end{equation*}
we conclude from this and  \eqref{defmu} that
\begin{equation}\label{tt}
{\rm mh}({\mathbb A}_*^t\times {\mathbb A}^r)=t.
\end{equation}
Comparing \eqref{t} with \eqref{tt} completes the proof of
the implication \break
$\rm(sol_3)\Rightarrow\rm(sol_2)$, and with it the proof of the first claim of the theorem.

Let the properties specified in the first statement of the theorem be satisfied.
Then it follows from the property
$\rm(sol_1)$ and \cite[Sect. 10.6, Thm.\,(4)]{Bo2} that the dimension of
maximal tori in $S$ is equal to
$\dim(S/{\mathscr R}_u(S))=\dim (G)$, what is equal to  ${\rm units}(S)$
in view of \eqref{c-ri} and  \eqref{18}. Equality \eqref{formu}
follows from equalities  \eqref{ddm} and
$\rm(sol_2)$. This proves the second claim of the theorem.

The group $S$ is unipotent  (respectively, is a torus) if and only if
it is solvable  (i.e., by (c), its underlying variety is isomorphic to
 ${\mathbb A}_*^t\times {\mathbb A}^r$), and by Theorem  \ref{ddi}, the equality
  ${\rm mh}(S)=0$
(respectively,
${\rm mh}(S)=\dim(S)$) holds. Now the last claim of the theorem follows from
\eqref{tt}.
\end{proof}

\begin{remark}\label{Laz}{\rm
Here is a short proof of M. Lazard's theorem
\cite{L}, suitable for the field $k$ of any characteristic.}

\begin{proof}[Proof of M. Lazard's theorem {\rm \cite{L}}]
Let the underlying variety of the group $G$ be iso\-mor\-phic to $\mathbb A^r$.  If $G$ is not unipotent,
then $G$ contains a non-identity semisimple element, and therefore, also a non-identity torus (see \cite[Thms.\,4.4(1), 11.10]{Bo2}). The action of this torus on  $G$ by left translations has no fixed points. This contradicts the fact that every algebraic torus action on $\mathbb A^r$  has a fixed point, see \cite[Thm.\,1]{Bi}.
\end{proof}
\end{remark}

The next two theorems show that unipotency and toricity of a con\-nected affine algebraic group can also be characterized in terms of the introduced numerical invariants of its underlying variety.

\begin{theorem}[{{\rm unipotency criterion}}]\label{unicr}
The following properties of a connected affine algebraic group
$G$ are equivalent:
\begin{enumerate}[\hskip 6.2mm\rm (a)]
\item[$\rm(u_1)$] $G$ is unipotent;
\item[$\rm(u_2)$] ${\rm mh}(G)={\rm units}(G)=0$;
\item[$\rm(u_3)$] the underlying variety of the group
$G$ is isomorphic to
${\mathbb A}^{\dim(U)}$.
\end{enumerate}
\end{theorem}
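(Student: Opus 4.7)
The plan is to establish the triangle of implications $(u_1)\Rightarrow(u_3)\Rightarrow(u_1)$ and $(u_1)\Rightarrow(u_2)\Rightarrow(u_1)$, using results already in hand: Grothendieck's corollary \cite[p.\,5-02, Cor.]{G}, M.\,Lazard's theorem (as re-proved in Remark \ref{Laz}), and the solvability criterion (Theorem \ref{lem8}). No genuinely new geometric input is needed; the task is to combine these.

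For $(u_1)\Rightarrow(u_3)$, I would simply invoke Grothendieck's corollary: the underlying variety of a connected unipotent group is isomorphic to an affine space, and since in this case $G$ equals its own unipotent radical, the dimension of that affine space is $\dim(G)=\dim(\mathscr{R}_u(G))$. The converse $(u_3)\Rightarrow(u_1)$ is precisely Lazard's theorem, which is available via Remark \ref{Laz} in any characteristic.

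For $(u_1)\Rightarrow(u_2)$, I would combine $(u_3)$ with two elementary observations about $\mathbb{A}^n$: it is contractible, so $H_i(\mathbb{A}^n,\mathbb{Q})=0$ for $i>0$, giving $\mathrm{mh}(G)=0$; and every invertible regular function on $\mathbb{A}^n$ is a nonzero constant, so by \eqref{units} we get $\mathrm{units}(G)=0$.

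The implication $(u_2)\Rightarrow(u_1)$ is the only step requiring a small argument, and it is the main (mild) obstacle. I would deduce it from the solvability criterion as follows: the hypothesis $\mathrm{mh}(G)=\mathrm{units}(G)$ from $(u_2)$ is exactly condition $(\mathrm{sol}_2)$, so $G$ is solvable. Applying $(\mathrm{sol}_3)$, the underlying variety of $G$ is isomorphic to $\mathbb{A}_*^{t}\times\mathbb{A}^r$ with $t=\mathrm{units}(G)=0$, hence isomorphic to $\mathbb{A}^r$. Then Lazard's theorem (Remark \ref{Laz}) forces $G$ to be unipotent, which also identifies $r=\dim(G)=\dim(\mathscr{R}_u(G))$ and closes the circle. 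This completes the proof.
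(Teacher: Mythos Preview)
Your proof is correct and close in spirit to the paper's, but the routing differs in a way worth noting. The paper handles both equivalences uniformly through Theorem~\ref{lem8}: since unipotent groups are solvable, $(u_1)\Leftrightarrow(u_2)$ follows from $(\mathrm{sol}_1)\Leftrightarrow(\mathrm{sol}_2)$ together with formula~\eqref{formu}, and $(u_1)\Leftrightarrow(u_3)$ follows from $(\mathrm{sol}_1)\Leftrightarrow(\mathrm{sol}_3)$ together with~\eqref{formu}. In particular, the paper never invokes Lazard's theorem or Grothendieck's corollary directly here; it concludes unipotency from $\dim(\mathscr{R}_u(G))=\dim(G)-\mathrm{units}(G)=\dim(G)$.

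You instead appeal to Grothendieck and Lazard directly for $(u_1)\Leftrightarrow(u_3)$, compute $\mathrm{mh}$ and $\mathrm{units}$ of $\mathbb{A}^n$ by hand for $(u_1)\Rightarrow(u_2)$, and in the last step of $(u_2)\Rightarrow(u_1)$ pass through $(\mathrm{sol}_3)$ and then Lazard rather than using~\eqref{formu}. This is a perfectly valid alternative: it is slightly more self-contained for the easy directions and makes the role of Lazard's theorem explicit, whereas the paper's version is more economical, reducing the entire statement to a corollary of Theorem~\ref{lem8}. Neither approach introduces any new idea beyond what is already in Theorem~\ref{lem8} and Remark~\ref{Laz}.
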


\begin{proof}
In view of solvability of unipotent group, the equivalence\linebreak  $\rm(u_1)\Leftrightarrow\rm(u_2)$
(respectively,
$\rm(u_1)\Leftrightarrow\rm(u_3)$)
follows from the equivalence
$\rm(sol_1)\Leftrightarrow\rm(sol_2)$
(respectively,
$\rm(sol_1)\Leftrightarrow\rm(sol_3)$)
and formula \eqref{formu} in Theorem \ref{lem8}.
\end{proof}

In the following theorem, the characteristic of the field $k$ can be arbitrary.

\begin{theorem}[toricity criterion]\label{torcr}
The following properties of a con\-nected affine algebraic group
 $G$ are equivalent:
\begin{enumerate}[\hskip 6.2mm\rm (a)]
\item[$\rm(t_1)$] $G$ is a torus;
\item[$\rm(t_2)$]  $\dim(G)={\rm units}(G)$;
\item[$\rm(t_3)$]  the underlying variety of the group
$G$ is isomorphic to
${\mathbb A}_*^{\dim(G)}$.
\end{enumerate}
\end{theorem}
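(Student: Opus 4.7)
The plan is to assemble the three equivalences from material already in the paper, keeping the argument characteristic-free as the theorem requires.

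First I would observe that $(\mathrm{t}_1)\Leftrightarrow(\mathrm{t}_2)$ is literally Lemma \ref{inequ}(ii). Its proof is purely structural — Rosenlicht's reduction to the case of affine $G$, the further reduction to reductive $G$ via ${\mathscr R}_u(G)\subseteq\ker\chi$ for every character $\chi$, density of the union of conjugates of a maximal torus $T$ in $G$, the resulting embedding ${\rm Hom}(G,\mathbb{G}_m)\hookrightarrow{\rm Hom}(T,\mathbb{G}_m)$, and the identification ${\rm units}(T)=\dim(T)$ — so it holds in every characteristic.

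Next, $(\mathrm{t}_1)\Rightarrow(\mathrm{t}_3)$ is immediate: an $n$-dimensional torus is by definition isomorphic as an algebraic group, hence as a variety, to $\mathbb{G}_m^n$, and by the notational convention the underlying variety of $\mathbb{G}_m^n$ is $\mathbb{A}_*^n$.

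For $(\mathrm{t}_3)\Rightarrow(\mathrm{t}_2)$ I would give a direct computation of ${\rm units}(\mathbb{A}_*^n)$. Since $k[\mathbb{A}_*^n]=k[x_1^{\pm 1},\ldots,x_n^{\pm 1}]$, one reads off $k[\mathbb{A}_*^n]^{\times}=k^{\times}\!\cdot\!\langle x_1,\ldots,x_n\rangle$, whence ${\rm units}(\mathbb{A}_*^n)=n$; equivalently, iterate \eqref{times} starting from the obvious ${\rm units}(\mathbb{A}_*^1)=1$. Under the hypothesis $n=\dim(G)$, this yields $(\mathrm{t}_2)$, and Lemma \ref{inequ}(ii) then gives $(\mathrm{t}_1)$.

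There is essentially no technical obstacle; the only point that requires care is that one should \emph{not} route $(\mathrm{t}_3)\Rightarrow(\mathrm{t}_1)$ through the chain $(\mathrm{sol}_3)\Rightarrow(\mathrm{sol}_1)$ of Theorem \ref{lem8} combined with the formula \eqref{formu}, because that chain rests on the rational homology computation of $H_*(\mathbb{A}_*^t,\mathbb{Q})$ and was established only over $k=\mathbb{C}$; the elementary unit count above bypasses all topological input and therefore covers the arbitrary characteristic explicitly allowed by the statement.
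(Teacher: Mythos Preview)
Your proof is correct and follows essentially the same route as the paper: Lemma~\ref{inequ}(ii) for $(\mathrm{t}_1)\Leftrightarrow(\mathrm{t}_2)$, the evident $(\mathrm{t}_1)\Rightarrow(\mathrm{t}_3)$, and the unit count via \eqref{times} and ${\rm units}(\mathbb{A}_*^1)=1$ for $(\mathrm{t}_3)\Rightarrow(\mathrm{t}_2)$. Your added remark about avoiding the homology-based chain in Theorem~\ref{lem8} to keep the argument characteristic-free is apt and matches the paper's explicit note that this theorem holds for $k$ of arbitrary characteristic.
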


\begin{proof}
Lemma \ref{inequ} gives $\rm(t_1)\Leftrightarrow\rm(t_2)$. The implication
$\rm(t_3)\Rightarrow\rm(t_2)$ follows from
\eqref{times} and ${\rm units}(\mathbb A_*^1)=1$,
and $\rm(t_1)\Rightarrow\rm(t_3)$ is evident.
\end{proof}

\noindent {\bf 5.\;Different group structures on the same variety.}\label{5}
As is known (see \cite[\S4, Exer.\,18, p.\,122]{Bou}),
there are infinitely many pairwise non-isomorphic connected unipotent algebraic groups of any fixed
dimension $\geqslant 7$; their underlying varieties,
however, all are isomorphic to each other (see Theorem \ref{lem8}). On the other hand, there are types of con\-nected algebraic groups for which
underlying varieties define
group structure unambiguously. The following Theorem
\ref{toroid} shows that such types in\-clude
semi-abelian varieties (the proof is not
uses restrictions on the characteristic of the field $k$). Below we will find other types of algebraic groups that have the indicated uniqueness property, see Theorems
\ref{thm8}(b) and\;\ref{thm9}.

\begin{theorem}\label{toroid} Let $G_1$ and $G_2$ be algebraic groups, one of which is a semi-abelian variety. The following properties are equivalent:
\begin{enumerate}[\hskip 4.2mm\rm(a)]
\item
the underlying varieties of the groups $G_1$ and $G_2$ are isomorphic;
\item the algebraic groups $G_1$ are $G_2$ are isomorphic.
\end{enumerate}
\end{theorem}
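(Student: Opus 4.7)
The implication $(b)\Rightarrow(a)$ is trivial. For $(a)\Rightarrow(b)$, first note that by Theorem \ref{toroidd} the property of being a semi-abelian variety is detected by the underlying variety (non-existence of a subvariety isomorphic to $\mathbb{A}^1$), so in fact both $G_1$ and $G_2$ are semi-abelian. Fix an isomorphism of algebraic varieties $f\colon G_1\to G_2$ and, after composing with a translation in $G_2$, assume $f(e)=e$. The task is to show that $f$ is a homomorphism of algebraic groups; being simultaneously a variety isomorphism, it will then be an isomorphism of algebraic groups.

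\medskip

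My plan is to study the defect morphism
\[
\psi\colon G_1\times G_1\longrightarrow G_2,\qquad \psi(x,y):=f(xy)\,f(x)^{-1}f(y)^{-1},
\]
so that $\psi\equiv e$ is exactly the statement that $f$ is a homomorphism; a direct check gives $\psi\equiv e$ on each axis $\{e\}\times G_1$ and $G_1\times\{e\}$. Let $\pi_i\colon G_i\to A_i:=G_i/(G_i)_{\rm aff}$ be the Chevalley projection onto the Albanese. Since $\pi_2\circ f\colon G_1\to A_2$ is a morphism to an abelian variety sending $e$ to $e$, Weil's theorem (or equivalently \cite[Thm.\,3]{R}, which was already used in the proof of Theorem \ref{toroidd}) implies that it is a homomorphism of algebraic groups. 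From this one immediately obtains $\pi_2\circ\psi\equiv e$, i.e., $\psi$ takes values in the Chevalley torus $T_2:=(G_2)_{\rm aff}$.

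\medskip

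It remains to show that the morphism $\psi\colon G_1\times G_1\to T_2$, which sends $(e,e)$ to $e$, is constant equal to $e$. Writing $T_2\simeq \mathbb{G}_m^{\dim T_2}$, each coordinate of $\psi$ is an invertible regular function on $G_1\times G_1$ with value $1$ at $(e,e)$, and by Rosenlicht's theorem \cite[Thms.\,2,\,3]{R} any such function is a character of $G_1\times G_1$. Thus $\psi$ itself is a homomorphism of algebraic groups into the commutative group $T_2$, and it therefore decomposes as $\psi(x,y)=\chi_1(x)\,\chi_2(y)$ with $\chi_i\in{\rm Hom}(G_1,T_2)$ (set $\chi_1(x):=\psi(x,e)$, $\chi_2(y):=\psi(e,y)$, and expand $\psi(x,y)=\psi((x,e)\cdot(e,y))$). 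The vanishing of $\psi$ on the two axes then forces $\chi_1=\chi_2=1$, whence $\psi\equiv e$ and $f$ is a group isomorphism. The heart of the argument—and the only place I anticipate having to be careful—is the two-step reduction of the defect $\psi$ from a $G_2$-valued morphism to a $T_2$-valued one (killing the abelian-variety part by Weil) and then to the trivial morphism (handling the toral part by Rosenlicht).
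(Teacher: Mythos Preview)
Your argument is correct, but it is considerably more elaborate than the paper's. The paper's proof is a single sentence: assuming $G_1$ is the semi-abelian variety, one takes a variety isomorphism $G_2\to G_1$, composes with a left translation of $G_1$ so that $e\mapsto e$, and then invokes \cite[Thm.\,3]{R} directly---that theorem already asserts that \emph{any} morphism from a connected algebraic group into a semi-abelian variety sending $e$ to $e$ is a group homomorphism. No defect map, no Chevalley decomposition, and no preliminary appeal to Theorem~\ref{toroidd} are needed, because the target (not the source) is what matters for Rosenlicht's theorem, and $G_1$ is semi-abelian by hypothesis.

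What you do instead is essentially reprove this instance of Rosenlicht's theorem by hand: you split the target $G_2$ into its abelian quotient $A_2$ and its toral part $T_2$, kill the $A_2$-component of the defect via rigidity of abelian varieties, and kill the $T_2$-component via the ``units are characters'' form of Rosenlicht's result. This is perfectly valid and has some expository value---it makes transparent \emph{why} semi-abelian targets are rigid---but it duplicates work already packaged in the cited theorem. Note also that your detour through Theorem~\ref{toroidd} (to conclude $G_2$ is semi-abelian) is avoidable: simply run the isomorphism in the direction $G_2\to G_1$, as the paper does.
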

\begin{proof} \hskip -1mm Let $G_1$ be a semi-abelian variety. Then by
 \cite[Thm.\,3]{R}, the com\-position of an isomorphism of the underlying varieties
$G_2\to G_1$ with a suitable left translation of
$G_1$ is an isomorphism of algebraic groups, which proves
 (a)$\Rightarrow$(b).
\end{proof}

\begin{cornonumber}\label{trtr}
Isomorphness of algebraic groups, among which there is either a torus or an abelian variety, is equivalent to isomorphness of their underlying varieties.
\end{cornonumber}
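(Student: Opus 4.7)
The plan is to deduce this corollary directly from Theorem \ref{toroid} by verifying that both tori and abelian varieties are instances of semi-abelian varieties, so that the hypothesis of the theorem is satisfied.

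First, the direction (b)$\Rightarrow$(a) is trivial, since any isomorphism of algebraic groups is in particular an isomorphism of the underlying varieties. So the content is in (a)$\Rightarrow$(b), and for this it suffices, by Theorem \ref{toroid}, to argue that a torus and an abelian variety are both semi-abelian varieties, i.e.\ that for each such $G$ the largest connected affine normal subgroup $G_{\rm aff}$ is a torus.

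If $G$ is a torus, this is immediate from the definition: $G$ itself is connected, affine, and normal, so $G_{\rm aff}=G$, which is a torus by assumption. If $G$ is an abelian variety, then $G$ is complete, and so is $G_{\rm aff}$ as a closed subvariety of $G$; being simultaneously complete, connected, and affine, $G_{\rm aff}$ is reduced to $\{e\}$, which is the trivial (zero-dimensional) torus. In either case $G_{\rm aff}$ is a torus, so $G$ is a semi-abelian variety in the sense of Section 4.

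With that observation in hand, Theorem \ref{toroid} applies to the pair $(G_1,G_2)$ whenever one of the two groups is either a torus or an abelian variety, and the corollary follows. There is no real obstacle here; the only point that warrants a line of justification is the verification that an abelian variety has trivial $G_{\rm aff}$, which rests on the standard fact that a connected complete affine variety is a point.
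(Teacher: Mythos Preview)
Your argument is correct and is exactly the intended one: the paper states the corollary as an immediate consequence of Theorem \ref{toroid}, relying on the evident fact that tori and abelian varieties are semi-abelian varieties, which you have spelled out in full.
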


From this, in particular, one obtains the fact, discovered by
A. Weil, that isomorphness of abelian varieties is equivalent to isomorphness of their underlying varieties (see \cite{W}).

\begin{remark}{\rm Semi-abelian varieties are commutative.\;In preprint \cite{Di} published after preprint \cite{P3} of the first version of this paper, it is proved that, for ${\rm char}(k)=0$, isomorphness of the underlying varieties of two connected commutative algebraic groups implies isomorphness of these algebraic groups. The existence of Witt groups shows that in this statement the condition ${\rm char}(k)=0$  cannot be dropped.}
\end{remark}

We now investigate the problem of determinability of group struc\-tu\-re by the properties of underlying variety for reductive algeb\-raic groups.

\begin{theorem}\label{thm7} Let $G_1$ and $G_2$ be connected affine algebraic groups, and let $R_i$ be a maximal reductive algebraic subgroup of $G_i$, $i=1, 2$.
If the underlying varieties of the groups $G_1$ and $G_2$ are isomorphic, then
the Lie algebras of the connected algebraic groups
$R_1$ and $R_2$ are isomorphic.
\end{theorem}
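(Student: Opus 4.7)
The plan is to use Mostow's Levi decomposition to peel off the unipotent radical (whose underlying variety is an affine space), deduce a homotopy equivalence between the two reductive factors, and then appeal to a classical result that reads off the Lie algebra of a compact connected Lie group from its homotopy type.

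First I would invoke Mostow's theorem, available since ${\rm char}(k)=0$: each $G_i$ is an algebraic semidirect product $R_i\ltimes\mathscr R_u(G_i)$, so its underlying variety is isomorphic to $R_i\times\mathscr R_u(G_i)$. The underlying variety of $\mathscr R_u(G_i)$ is in turn isomorphic to $\mathbb A^{d_i}$ with $d_i:=\dim\mathscr R_u(G_i)$ by \cite[p.\,5-02, Cor.]{G}. By Theorem~\ref{ddi} the number $d_i=\dim(G_i)-{\rm mh}(G_i)$ is determined by the underlying variety of $G_i$ alone, so $d_1=d_2=:d$. The hypothesis therefore produces an isomorphism of complex manifolds $R_1\times\mathbb C^d\simeq R_2\times\mathbb C^d$, and since $\mathbb C^d$ is contractible the complex Lie groups $R_1$ and $R_2$ are homotopy equivalent as topological spaces.

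Next I would pick compact real forms $K_i\subset R_i$; as recalled in Section~3, $R_i$ is homeomorphic to $K_i$ times a Euclidean space, so $K_1$ and $K_2$ become homotopy equivalent compact connected Lie groups. I would then invoke the classical fact that homotopy equivalent compact connected Lie groups have isomorphic Lie algebras: the exterior-algebra structure of $H^\ast(K_i;\mathbb Q)$ reads off the rank and the fundamental degrees of $K_i$, which already pin down ${\rm Lie}(K_i)$ up to the $B_\ell/C_\ell$ ambiguity in rank $\geqslant 3$; this residual ambiguity is resolved by the integral (or mod-$2$) cohomology, which is equally preserved by the homotopy equivalence. Complexifying, ${\rm Lie}(R_i)\simeq{\rm Lie}(K_i)\otimes_{\mathbb R}\mathbb C$ then yields ${\rm Lie}(R_1)\simeq{\rm Lie}(R_2)$.

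The main obstacle is this last classification step: rational cohomology alone does not distinguish the Lie algebras of types $B_\ell$ and $C_\ell$ for $\ell\geqslant 3$ (their exterior generators sit in the same degrees), so the purely rational-homology arguments used in the earlier theorems are not enough here — one must appeal to finer, torsion-sensitive homotopy invariants, which are nevertheless automatically preserved by any isomorphism of underlying varieties.
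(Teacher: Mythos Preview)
Your argument is essentially the paper's own: Levi decomposition in characteristic zero, contractibility of the unipotent radical to obtain a homotopy equivalence between the reductive parts (hence between their compact real forms), and then the fact that homotopy-equivalent compact connected Lie groups have isomorphic Lie algebras---for which the paper simply cites Scheerer's theorem \cite[Satz]{Sc}, the precise ``classical fact'' you invoke and attempt to sketch. Your detour through Theorem~\ref{ddi} to force $d_1=d_2$ is harmless but unnecessary, since contractibility of each $\mathscr R_u(G_i)$ already yields the homotopy equivalence $G_i\simeq R_i$ regardless of whether the $d_i$ coincide.
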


\begin{proof} From ${\rm char}(k)=0$ it follows that the group $G_i$ is a semidirect product of the groups $R_i$ and  ${\mathscr R}_u(G_i)$ (see \cite[11.22]{Bo2}).
Hence the group $R_i$ is connected (because $G_i$ is connected), and
the underlying manifolds of the groups
 $G_i$ and $R_i$ are homotopy equivalent
 topological manifolds
(see the proof of Theorem \ref{ddi}).

Consider a compact form ${\sf R}_i$ of the reductive algebraic group $R_i$.
The underlying manifolds of the groups
$R_i$ and ${\sf R}_i$ are homotopy equivalent.

Suppose that the underlying varieties of the groups $G_1$ and $G_2$ are isomor\-phic algebraic varieties, and therefore, homeomorphic topolo\-gi\-cal manifolds.\;Then the underlying manifolds of the groups ${\sf R}_1$ and
${\sf R}_2 $ are homotopy equivalent topological manifolds.\;In view of  \cite[Satz]{Sc}, this implies that the real Lie algebras ${\rm Lie} \, ({\sf R}_1)$ and ${\rm Lie} \, ({\sf R}_2) $ are isomor\-phic. Now the claim of the theorem follows from the fact that the real Lie algebra ${\rm Lie} \, ({\sf R}_i)$ is a real form of the complex Lie algebra ${\rm Lie} \, (R_i)$.
\end{proof}

\begin{theorem}\label{thm8}
Let $R$ be a connected reductive algebraic group.

   {\rm (i)} If $G$ is an algebraic group such that the underlying varieties of
$G$ and $R$ are isomorphic, then
\begin{enumerate}[\hskip 9.2mm\rm(a)]
\item $G$ is connected and reductive, and the Lie algebras
${\rm Lie}\,(R)$ and ${\rm Lie}\,(G)$ are isomorphic;
\item in the case of a semisimple simply connected group
 $R$, the algebraic groups $R$ and $G$ are isomorphic.
\end{enumerate}

{\rm (ii)} The number of all algebraic groups, considered up to isomor\-phism, whose underlying varieties are isomorphic to that of
 $R$, is finite.
\end{theorem}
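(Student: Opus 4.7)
The plan is to reduce all three assertions to criteria established in Sections~3--4, Theorem~\ref{thm7}, and the finiteness theorem (Theorem~\ref{finired}). For (i)(a), I would first note that since the underlying variety of $R$ is affine, irreducible, and connected, the same holds for the underlying variety of $G$, so $G$ is a connected affine algebraic group. The invariants $\dim(\,\cdot\,)$ and $\mathrm{mh}(\,\cdot\,)$ depend only on the underlying variety; combining this with the reductivity criterion (Theorem~\ref{crre}) applied to $R$ immediately gives $\dim(G)=\mathrm{mh}(G)$, so $G$ is reductive. Applying Theorem~\ref{thm7} to the pair $G_1:=G$, $G_2:=R$ -- noting that a reductive group is its own maximal reductive subgroup -- then yields $\mathrm{Lie}(G)\simeq\mathrm{Lie}(R)$.

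For (i)(b), assume $R$ is semisimple and simply connected. Part (a) gives that $G$ is connected reductive with a semisimple Lie algebra, hence semisimple. The classical classification says that two connected semisimple complex algebraic groups with isomorphic Lie algebras and isomorphic algebraic fundamental groups are isomorphic, so it suffices to show that $G$, like $R$, is algebraically simply connected. Here I would invoke the fact recalled just before Theorem~\ref{thm3}: the underlying topological manifold of any connected reductive complex algebraic group is homeomorphic to the product of a compact real form with a Euclidean space. Hence the topological $\pi_1$ of the underlying variety of such a group coincides with $\pi_1$ of its compact form, which for a semisimple group is the algebraic fundamental group. Since $G$ and $R$ have homeomorphic underlying varieties and $R$ is algebraically simply connected, so is $G$, giving $G\simeq R$.

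Part (ii) is then obtained from Theorem~\ref{finired}: by (i)(a), any $G$ in the statement is connected reductive with Lie algebra isomorphic to $\mathrm{Lie}(R)$, and therefore of the same rank as $R$; the finiteness theorem bounds the number of isomorphism classes of such groups.

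The main obstacle is the topological argument in~(i)(b): one must carefully pass from a variety isomorphism $G\simeq R$ through the classical topology to the compact real forms, and then identify the topological $\pi_1$ of the compact form with the algebraic fundamental group of the corresponding complex semisimple group. The rest of the proof is a direct assembly of the reductivity criterion, Theorem~\ref{thm7}, and Theorem~\ref{finired}.
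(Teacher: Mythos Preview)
Your proposal is correct and close to the paper's argument. The one difference worth noting is in (i)(a): you establish reductivity of $G$ via the reductivity criterion (Theorem~\ref{crre}) and then apply Theorem~\ref{thm7}, whereas the paper applies Theorem~\ref{thm7} first and reads off reductivity from a dimension count---the maximal reductive subgroup of $G$ has Lie algebra isomorphic to ${\rm Lie}(R)$, hence dimension $\dim(R)=\dim(G)$, so it coincides with $G$; this avoids invoking Theorem~\ref{crre} at all. Parts (i)(b) and (ii) match the paper's proof (the paper phrases the topological step in (i)(b) more tersely, simply noting that simple connectedness of the underlying manifold of $R$ transfers to $G$, and for (ii) cites the finiteness theorem in its fixed-dimension form via Remark~\ref{r5} rather than fixed rank, but the content is the same).
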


\begin{proof} (i)(a) It follows from connectedness of the group $R$ and the con\-di\-tion on the group $G$
that the group $G$ is connected. In view of  Theorem \ref{thm7} and reductivity of the group $R$, the Lie algebra of a maximal reductive subgroup in $G$ is isomorphic to ${\rm Lie}\,(R)$. In parti\-cu\-lar, the dimension of this subgroup is equal to $\dim (R)$. Since $\dim(R) = \dim (G) $, this subgroup coincides with $G$.

(i)(b) From the condition on the group $G$ and simply connectedness of the underlying
manifold of the group $R$ it follows that the underlying
manifold of the group $G$ is simply con\-nected. In view of {\rm (a)}, the Lie algebras ${\rm Lie}\,(R)$ and ${\rm Lie}\,(G)$ are isomorphic. Consequently, the algeb\-raic groups $R$ and $G$ are iso\-mor\-phic (see \cite[Chap.\,1, \S3, $3^\circ$, Chap.\,3, \S3,
$4^\circ$]{OV}).

Statement (ii) follows from (i)(a) and finiteness of
the numbers of all, considered up to isomorphism,
connected reductive algebraic groups of a fixed dimension
This finiteness theorem, which the author failed to find in the li\-te\-ra\-ture, is proved below in appendix (Section \ref{Ap}, Theorem \ref{finired}, and Remark \ref{r5}).
\end{proof}

\begin{remark} {\rm Using the proof of Theorem \ref{finired} given in Section \ref{Ap}, one can obtain an upper bound for the number specified in Theorem \ref{thm8}(ii)
 (see also Remark\,\ref{up} below).
 }
\end{remark}

Theorem \ref{thm1} provides examples of
non-isomorphic connected reductive non-semisimple algebraic groups whose underlying varieties are
isomor\-phic  (according to Theorem\;\ref{thm8}(a), the Lie algebras of these groups are iso\-morphic).\;In the case of connected semisimple algebraic groups (that is, when $Z=\{e\}$), Theorem \ref{thm1} degenerates into
a trivial statement that does not provide such examples.\;However, non-isomorphic connected semisimple algebraic groups whose group varieties are  isomorphic do exist.\;Below is described a method that allows one to construct them.
It is suitable for a field $k$ of any characteristic.

\vskip 2mm

\noindent{\bf 6.\;Construction of non-isomorphic semisimple algebraic gro\-ups with
isomorphic underlying varieties.}
Fix a positive integer
$n$ and an abstract group $H$. Consider the group
\begin{equation*}
G:=H^{n}:=H\times\cdots\times H\quad \mbox{($n$ factors).}
\end{equation*}
We have
${\mathscr C}(G)=
{\mathscr C}(H)^{n}$.

Let $F_n$ be a free group of rank  $n$ with a free system of generators
 $x_1,\ldots, x_n$. For any elements $g=(h_1, \ldots, h_n)\in G$, where $h_j\in H$,  and  $w
\in F_n$, denote by $w(g)$ the element of $H$, which is the image of the element $w$
under the homomorphism $F_n\to H$, mapping $x_j$ to $h_j$ for every\;$j$.

Every element $\sigma \in {\rm End}(F_n)$
determines the map
\begin{equation} \label{mor}
{\widehat{\sigma}}\colon G\to G,\quad g\mapsto (\sigma(x_1)(g),\ldots, \sigma(x_n)(g)).
\end {equation}
It is not hard to see that
\begin{equation}\label{actio}
\begin{split}
\widehat{\sigma\circ \tau}
&=\widehat{\tau}\circ \widehat{\sigma}\;\,\mbox{для любых $\sigma, \tau\in {\rm End}(F_n)$},\\
\widehat{{\rm id}_{F_n}}&={\rm id}_{G}.
\end{split}
\end{equation}
It follows from \eqref{mor}
and the definition of $w(g)$ that

\begin{enumerate}[\hskip 2.2mm\rm(i)]
\item
$\widehat{\sigma}(S^{n})
\subseteq {S^{n}}$ для любой подгруппы $S$ в $H$;
\item $\widehat{\sigma}(gz)=\widehat{\sigma}(g)\widehat{\sigma}(z)$\;\;для любых $g\in G, z\in
{\mathscr C}(G)$.
\end{enumerate}
In particular, the restriction of the map $\widehat{\sigma}$
to the group ${\mathscr C}(G)$
is its endomorphism.

From \eqref{actio} it follows that if $\sigma \in {\rm Aut}(F_n) $, then
$\widehat{\sigma}$ is a bijection (but, in general, not an automorphism of the group $G$).\;Moreover, if $H$ is an algebraic group (respectively, a Lie group), then
$\widehat{\sigma}$ is an automorphism of the algebraic variety (respectively, a diffeomorphism of the differentiable manifold) that is $G$.

Consider now an element $\sigma \in {\rm Aut}(F_n)$ and
a subgroup of $C$ in ${\mathscr C}(G)$. Then from (ii) it follows
$C$-equivariance of the bijection $\widehat{\sigma}\colon G\to G$ if we assume that
every element $c\in C$ acts on the left
copy of $G$ as the translation (multiplication) by $c$, and on the right
one as the translation  by $\widehat{\sigma}(c)$.\;The quotient for the first action is the group
$G/C$, and for the second is the group $G/\widehat{\sigma}(C)$. Hence
 $\widehat{\sigma}$ induces a bijection $G/C\to G/\widehat{\sigma}(C)$.\;Moreover, if $H$  is an algebraic group (respectively, real Lie group), then this bijection is
 an isomorphism of algebraic varieties (respectively, a diffeomorphism of differentiable manifolds); see \cite[6.1]{Bo2}. Thus, $G/C$ and $G/\widehat{\sigma}(C)$  are isomorphic algebraic varieties (respec\-tively, diffeomorphic differentiable manifolds). But,
in general, $G/C$ and $G/\widehat{\sigma}(C)$ are not isomorphic as algebraic groups (respectively, as Lie groups).

Indeed, take for $H$ a simply connected semisimple algebraic group (re\-spec\-tively, a real compact real Lie group).\;Then $G$ is also a simply connected algeb\-raic group (respectively, a real compact Lie group), so the group ${\mathscr C}(G)$ is finite.
 Consider the natural epimomorphisms
$\pi\colon G\to G/C$ and $\pi_{\widehat{\sigma}(C)}\colon G\to G/\widehat{\sigma}(C)$.  Since the group $C$ is finite, the differentials
\begin{equation*}\label{isomo}
d_{{\boldmath \centerdot}}\pi_C\colon {\rm Lie}(G)\to {\rm Lie}(G/C)\quad\mbox{и}\quad
d_{{\boldmath \centerdot}}\pi_{\widehat{\sigma}(C)}\colon {\rm Lie}(G)\to {\rm Lie}(G/\widehat{\sigma}(C))
\end{equation*}
are the Lie algebra isomorphisms.\;Suppose there is an isomorphism of algebraic groups  (respectively, real Lie groups) $\alpha\colon G/C\to G/\widehat{\sigma}(C)$. Then
\begin{equation*}\label{cis}
(d_{{\boldmath \centerdot}}\pi_{\widehat{\sigma}(C)})^{-1}\circ d_{{\boldmath \centerdot}}\alpha\circ d_{{\boldmath \centerdot}}\pi_C\colon {\rm Lie}(G)\to {\rm Lie}(G)
\end{equation*}
is the Lie algebra automorphism of ${\rm Lie}(G)$. Since $G$ is simply connected, it has the form
$d_{{\boldmath \centerdot}}\widetilde{\alpha}$ for some automorphism
$\widetilde{\alpha}\in {\rm Aut}(G)$
(see \cite[Thm.\,6, p.\,30]{OV}). It follows from the construction that the diagram
\begin{equation*}
\begin{matrix}
\xymatrix@C=6mm@R=8mm{
G\ar[r]^{\widetilde{\alpha}}\ar[d]_{\pi_C}& G\ar[d]^{\pi_{\widehat{\sigma}(C)}}\\
G/C\ar[r]^{\hskip -4.5mm \alpha}& G/\widehat{\sigma}(C)
}
\end{matrix}
\end{equation*}
is commutative, which, in turn, implies that  $\widetilde{\alpha}(C)=\widehat{\sigma}(C)$.

Thus, the algebraic groups (respectively, real Lie groups) $G/C$ and $G/\widehat{\sigma}(C)$ are isomorphic
if and only if the subgroups $C$ and $\widehat{\sigma}(C)$ of the group $G$
lie in the same orbit of the
natural action of the group
${\rm Aut}(G)$ on the set
of all subgroups
of the group
${\mathscr C}(G)$.\;This action is reduced to the action of the group ${\rm Out}(G)$
(isomor\-phic to the automorphism group of
the Dynkin diagram of the group  $G$; see \cite[Chap.\,4, \S4, no.\,1]{OV}) because the group ${\rm Int}(G)$ acts on
${\mathscr C}(G)$ trivially. It is not difficult to find
$H$, $\sigma$, and $C$ such that the groups $C$ and $\widehat{\sigma}(C)$ do not lie in
the same ${\rm Out}(G)$-orbit. Here is a concrete example.

\vskip 2mm

\noindent {\bf 7.\;Example.} Consider a simply connected simple algebraic (res\-pecti\-vely, real compact Lie) group $H$ with a nontrivial center.
Take $n=2$, so that
\begin{equation}\label{HH}
G=H\times H.
\end{equation}
Let the element $\sigma\in {\rm End}(F_2)$ be defined by the equalities
\begin{equation}\label{sig}
\sigma(x_1)=x_1,\quad \sigma(x_2)=x_1x_2^{-1};
\end{equation}
clearly, $x_1, x_1x_2^{-1}$ is a free system of generators of the group
 $F_2$, so $\sigma\in{\rm Aut}(F_2)$). Let $S$ be a non-trivial
subgroup of the group ${\mathscr C}(H)$. Take
\begin{equation}\label{Ce}
C:=\{(s, s)\mid s\in S\}.
\end{equation}
Then from \eqref{mor}, \eqref{sig}, \eqref{Ce} it follows that
\begin{equation}\label{sCe}
\widehat{\sigma}(C)=\{(s, e)\mid s\in S\}.
\end{equation}
In view of simplicity of the group $H$, the elements of ${\rm Out}(G)$ carry out
permutations of
the factors on the right-hand side of  the equality
\eqref{HH}.
This,
\eqref{Ce}, and \eqref{sCe} imply
that $C$
and $\widehat{\sigma}(C)$ do not lie in the same
${\rm Out}(G)$-orbit.
Therefore,
\begin{equation*}
G/C=(H\times H)/C\quad \mbox{and}\quad G/\widehat{\sigma}(C)=(H/S)\times H
\end{equation*}
are non-isomorphic connected semisimple algebraic groups (respecti\-vely, real compact Lie groups), whose underlying varieties (respectively, manifolds)  are isomorphic (respectively, diffeomorphic).

For example, let $H\!=\!{\rm SL}_d$,
$d\!\geqslant\! 2$, and $S\!=\!\langle z\rangle$, where
$z\!=\!{\rm diag}(\varepsilon,\ldots, \varepsilon) \break\in H$,  $\varepsilon\in k$ is a primitive $d$-th root of  $1$.\;In this case, we obtain non-isomorphic algebraic groups
\begin{equation*}
G/C=({\rm SL}_d\times {\rm SL}_d)/\langle(z,z)\rangle, \quad G/\widehat{\sigma}(C)={\rm PSL}_d\times {\rm SL}_d,
\end{equation*}
whose underlying varieties are isomorphic.\;Note that if $d=2$, then
$G={\rm Spin}_4$, $G/C={\rm SO}_4$.

For $H={\sf SU}_d$ and the same
group  $S$, we obtain that
\begin{equation*}
G/C=K_1:=({\sf SU}_d\times {\sf SU}_d)/C,\quad G/\widehat{\sigma}(C)=K_2:={\sf PU}_d\times {\sf SU}_d
\end{equation*}
are  non-isomorphic connected semisimple compact real Lie groups who\-se underlying mani\-folds are diffeomorphic. For $d=p^r$ with prime $p$, this is proved in \cite[p.\,331]{BB}, where non-isomorphness of the groups $K_1$ and $K_2$ is deduced from the non-isomorphness of their Pontryagin rings  $H_*(K_1, \mathbb Z/p\mathbb Z)$ and $H_*(K_2, \mathbb Z/p\mathbb Z)$ (discribing these rings is a non-trivial problem). Note that if $d=2$, then
\begin{equation}\label{rHut}
K_1={\sf SO}_4,\quad K_2={\sf SO}_3\times {\sf SU}_2.
\end{equation}
That the underlying manifolds
\eqref{rHut} are diffeomorphic was known long ago:
in \cite[Chap.\,3, \S3.D]{Ha}, a diffemorphism between them
is const\-ructed by means of
quaternions.

\vskip 2mm

\noindent{\bf 8.\;The case of connected simple algebraic groups.}
The following theorem
shows that the phenomenon under exploration is not possible
for simple groups.

\begin{theorem}\label{thm9}
Let
$G_1$ and $G_2$ be algebraic groups, one of which is connected and simple.
The following properties are equivalent:
\begin{enumerate}[\hskip 4.2mm\rm(a)]
\item the underlying varieties of the groups
$G_1$ and $G_2$ are isomorphic;
\item algebraic groups $G_1$ and $G_2$ are isomorphic.
\end{enumerate}
\end{theorem}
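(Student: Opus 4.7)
The plan is to reduce the theorem to the isogeny classification of complex simple algebraic groups and then separate the surviving ambiguity by a topological invariant of the underlying variety.

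Without loss of generality I take $G_1$ to be the connected simple one. By Theorem~\ref{thm8}(i)(a) applied to $R=G_1$, the hypothesis that the underlying varieties of $G_1$ and $G_2$ are isomorphic forces $G_2$ to be connected reductive with $\mathrm{Lie}(G_1)\simeq\mathrm{Lie}(G_2)$. Since $G_1$ is simple, $\mathrm{Lie}(G_1)$ is a simple (in particular non-abelian) complex Lie algebra; hence so is $\mathrm{Lie}(G_2)$, which forces the reductive group $G_2$ to be connected and simple as well.

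Let $\widetilde G$ denote the simply connected semisimple complex algebraic group with this common Lie algebra; then $G_i\simeq \widetilde G/C_i$ for finite subgroups $C_1,C_2\subseteq\mathscr C(\widetilde G)$. A variety isomorphism $G_1\to G_2$ is in particular a homeomorphism of the underlying complex manifolds, and since $\widetilde G$ is topologically simply connected (being homotopy equivalent to a simply connected compact Lie group, namely its compact real form), the natural identification $\pi_1(\widetilde G/C)\simeq C$ shows $C_1\simeq C_2$ as abstract groups. By the commutative-diagram analysis already used in Section~6, the algebraic groups $\widetilde G/C_1$ and $\widetilde G/C_2$ are isomorphic if and only if $C_1$ and $C_2$ lie in the same orbit of the natural action of $\mathrm{Aut}(\widetilde G)$ on subgroups of $\mathscr C(\widetilde G)$, so it remains to verify this orbit coincidence.

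For every simple type except $D_n$ with $n$ even, $\mathscr C(\widetilde G)$ is cyclic and so a subgroup is determined by its order; thus $C_1\simeq C_2$ immediately yields $C_1=C_2$. For $D_4$, triality gives $\mathrm{Out}(\widetilde G)=S_3$ acting as the full $\mathrm{Aut}(\mathbb Z/2\oplus\mathbb Z/2)=S_3$ on the center, so any two order-$2$ subgroups lie in one orbit. The main obstacle, where the plan requires real work, is $D_n$ with $n$ even and $n\geqslant 6$: here $\mathrm{Out}(\widetilde G)=\mathbb Z/2$ has two orbits on the three order-$2$ subgroups of $\mathscr C(\widetilde G)$, corresponding to the quotients $\mathrm{SO}_{2n}$ on one hand and $\mathrm{HSpin}_{2n}^{\pm}$ on the other (the latter two being $\mathrm{Out}$-conjugate, hence already isomorphic as algebraic groups). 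To rule out a variety isomorphism between $\mathrm{SO}_{2n}$ and $\mathrm{HSpin}_{2n}$, I would invoke a topological invariant of the underlying variety finer than $\pi_1$; a natural choice is the Pontryagin ring $H_*(\,\cdot\,;\mathbb Z/2)$ of the underlying (compact-form) manifold, whose known explicit structures for $\mathrm{SO}_{2n}$ and $\mathrm{HSpin}_{2n}$ are non-isomorphic, in the same spirit as the computation from \cite{BB} cited in Section~7.
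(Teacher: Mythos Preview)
Your reduction to the isogeny classification and the case analysis on the center are exactly the paper's route: pass to the simply connected cover $\widetilde G$, use $\pi_1$ to see $|C_1|=|C_2|$, dispose of the cyclic-center types and of ${\sf D}_4$ by triality, and isolate ${\sf D}_{2m}$ with $m>2$ as the only genuine case, where one must show that ${\rm SO}_{4m}$ and the half-spin group have non-isomorphic underlying varieties.

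The gap is in this last step. The Pontryagin ring $H_*(\,\cdot\,;\mathbb Z/2\mathbb Z)$ is \emph{not} a topological invariant of the underlying manifold: the Pontryagin product is induced by the multiplication map $G\times G\to G$, so it depends on the $H$-space structure, i.e., on the group law. In Section~7 that ring is invoked for a different purpose---to show $K_1\not\simeq K_2$ \emph{as Lie groups}---and for that it is legitimate. Here, by contrast, you must separate the two underlying spaces up to homotopy equivalence, so you need an invariant of the bare space. The cohomology ring with cup product would be such an invariant, but it is not fine enough either: as noted in \cite[p.\,330]{BB} (and in the paper's footnote), $H^*({\sf SSpin}_{4m},\mathbb Z/2\mathbb Z)$ and $H^*({\sf SO}_{4m},\mathbb Z/2\mathbb Z)$ are isomorphic as $\mathbb Z/2\mathbb Z$-algebras precisely when $4m$ is a power of $2$, so e.g.\ $m=4$ would slip through.

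The paper closes the case by using a genuinely finer homotopy invariant of the underlying space: the $\bmod\ 2$ cohomology \emph{as an algebra over the Steenrod algebra}. By \cite[Thm.\,9.1]{BB}, $H^*({\sf SSpin}_{4m},\mathbb Z/2\mathbb Z)$ and $H^*({\sf SO}_{4m},\mathbb Z/2\mathbb Z)$ are non-isomorphic over the Steenrod algebra for every $m>2$, hence the compact forms are not homotopy equivalent and the underlying varieties of ${\rm SSpin}_{4m}$ and ${\rm SO}_{4m}$ are not isomorphic. Replacing your Pontryagin-ring step by this Steenrod-algebra argument makes the proof complete and identical to the paper's.
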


\begin{proof} Let  the group $G_1$ be connected and simple.

Suppose (a) holds. Let $\widetilde G_1$ be a simply connected
algebraic group with the Lie algebra isomorphic to
 ${\rm Lie}\,(G_1)$.  Then $G_1$ is isomorphic to ${\widetilde G}_1/Z_1$ for some subgroup $Z_1$ of ${\mathscr C}(\widetilde G_1)$.
From Theorem \ref{thm8} it follows that the group $G_2$ is isomorphic to ${\widetilde G}_1/Z_2$ for some subgroup $Z_2$ of
${\mathscr C}(\widetilde G_1)$. As explained above, statement (b) is equivalent to the property that the subgroups $Z_1$ and $Z_2$ lie in the same orbit
of the natural action of the group
${\rm Out}({\widetilde G}_1)$
(isomorphic to the automorphism group of the Dynkin diagram
of the group  $\widetilde G_1$)
on the set of all subgroups of the group
${\mathscr C}(\widetilde G_1)$.
We shall show that the subgroups
$Z_1$ and $Z_2$ indeed lie in the same ${\rm Out}(\widetilde G_1)$-orbit.

Since the fundamental groups of topological manifolds $G_1$ and $G_2$ are isomorphic to, respectively, $Z_1$ and $Z_2$, it follows from (a) that
that the finite groups $Z_1$ and $Z_2$ are isomorphic. Let $d$ be their order.

The structure of the group ${\mathscr C}(\widetilde G_1)$ is known (see \cite[Table\,3, pp. 297--298]{OV}). Namely, if the type of the simple group $\widetilde G_1$ is different from
\begin{equation} \label{Df}
{\sf D}_\ell \; \; \mbox {with even $\ell \geqslant 4$,}
\end{equation}
then ${\mathscr C}(\widetilde G_1)$ is a cyclic group. In the case of the group
$\widetilde G_1$ of type \eqref{Df}, the group
${\mathscr C}(\widetilde G_1)$ is isomorphic to the Klein four-group
$\mathbb Z /2\mathbb Z \oplus \mathbb Z /2\mathbb Z$.
Since there is at most one subgroup of a given finite order in
any cyclic group, we get that if the type of $\widetilde G_1$ is different from \eqref{Df},
then $Z_1 = Z_2$,
so in this case, the subgroups $Z_1$ and $Z_2$ lie in the same ${\rm Out}(G_1)$-orbit.

Now, let  $\widetilde G_1$ be of type \eqref{Df}.\;This means that $\widetilde G_1={\rm Spin}_{4m}$ for some integer $m\geqslant 2$.\;Since $|{\mathscr C}(\widetilde G_1)|=4$, only the cases $d=1, 2, 4$ are possible. It is clear that $Z_1=Z_2$ for $d=1$ and $4$,  so in these cases, as above, the subgroups
$Z_1$ and $Z_2$  lie in the same ${\rm Out}(G_1)$-orbit.
Therefore, it remains to consider only the case
$d=2$.

There are exactly three subgroups of order $2$ in ${\mathscr C}(\widetilde G_1)$. The natural action on ${\mathscr C}(\widetilde G_1)$ of the group ${\rm Out}({\rm Spin}_{4m})$ (isomorphic to the auto\-mor\-phism group
of the Dynkin diagram of the group $\widetilde G_1$)
can be easily described explicitly using the information specified
in \cite[Table 3, p. 297--298]{OV}\footnote{In the notation of  \cite[Table\,3, p. 297--298]{OV}, for $ m = 2 $, each permutation of the vectors $h_1, h_3, h_4$ with fixed $h_2$ is realized by some automorphism of the Dynkin dia\-gram (identified with the corresponding outer automorphism), and for $m>2$, the only non-trivial automorphism of the Dynkin diagram swaps $ h_ {2m} $ and $ h_ {2m-1} $ and leaves all the rest $h_i$'s fixed.}.\;This description shows that
the number of ${\rm Out}({\rm Spin}_ {4m})$-orbits
on the set of these subgroups equals $1$ for $ m = 2 $ and equals $2$ for $ m> 2 $. Thus,
for $m = 2$, the groups $G_1$ and $G_2$ are isomorphic and it remains for us to consider the case $m>2$.

The quotient group of the group ${\rm Spin}_ {4m}$ by a subgroup of order 2 in
${\mathscr C} (\widetilde G_1)$, which is not fixed (respectively, fixed) with respect to the group
${\rm Out}({\rm Spin}_{4m})$, is the half-spin group ${\rm SSpin}_ {4m}$ (respectively, the orthogonal group ${\rm SO}_{4m}$).
 Let $ {\sf SSpin}_ {4m}$ and
${\sf SO}_{4m}$ be the compact  real forms of the groups ${\rm SSpin}_{4m}$ and
${\rm SO}_{4m}$, respectively. If the underlying varieties of the groups ${\rm SSpin}_{4m}$ and ${\rm SO}_{4m}$ were isomorphic, then the underlying manifolds of the groups
 ${\sf SSpin}_{4m}$ and
${\sf SO}_{4m}$ were homo\-topy equi\-valent.\;But accord\-ing to
\cite[Thm.\,9.1]{BB},
 for $m>2$, they are not homo\-topy equivalent because $H^*({\sf SSpin}_ {4m}, \mathbb Z /2\mathbb Z)$ and $H^*({\sf SO}_ {4m}, \mathbb Z /2\mathbb Z)$ for $m> 2$ are not isomorphic as algebras over the Steenrod algebra\footnote {Note that
 $H^*({\sf SSpin}_ {n}, \mathbb Z /2\mathbb Z) $ and $H^*({\sf SO}_ {n},
\mathbb Z /2\mathbb Z) $ are isomorphic as algebras over
$\mathbb Z /2\mathbb Z$ if (and only if)
 $n$ is a power of $2$, see \cite [p.\, 330]{BB}.}.
Hence the underlying varieties of  the groups ${\rm SSpin}_ {4m}$ and $ {\rm SO}_{4m}$ for $m>2$ are not isomorphic.\;This completes the proof of implication (a)$\Rightarrow$(b).\;Impli\-cation (b)$\Rightarrow$(a) is obvious.
\end{proof}

Considerations used in the proof of Theorem \ref{thm9}
yield a proof of the following Theorem \ref{BB}, which was published in \cite{BB} without proof.

\begin{theorem}[{{\rm  \cite[Thm. 9.3]{BB}}}]\label{BB}
The underlying manifolds of two con\-nected real compact simple Lie groups are
homotopy equivalent
if and only if these Lie groups
are isomorphic.
\end{theorem}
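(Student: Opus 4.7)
The plan is to follow almost verbatim the argument of Theorem \ref{thm9}, replacing ``isomorphism of underlying varieties'' by ``homotopy equivalence of underlying manifolds'' throughout. Let ${\sf G}_1$ and ${\sf G}_2$ be connected real compact simple Lie groups whose underlying manifolds are homotopy equivalent; the implication from isomorphism to homotopy equivalence being trivial, I focus on the converse.

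First I would invoke Scheerer's theorem \cite{Sc}, already used in the proof of Theorem \ref{thm7}, to conclude that the real Lie algebras ${\rm Lie}({\sf G}_1)$ and ${\rm Lie}({\sf G}_2)$ are isomorphic. Let $\widetilde{\sf G}$ denote the simply connected compact Lie group with this common Lie algebra. Then ${\sf G}_i\simeq \widetilde{\sf G}/Z_i$ for some subgroup $Z_i$ of ${\mathscr C}(\widetilde{\sf G})$, $i=1,2$. Since $\pi_1$ is a homotopy invariant and $\pi_1({\sf G}_i)\simeq Z_i$, the finite abelian groups $Z_1$ and $Z_2$ are isomorphic; let $d$ be their common order.

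Exactly as in the proof of Theorem \ref{thm9}, the Lie groups ${\sf G}_1$ and ${\sf G}_2$ are isomorphic if and only if $Z_1$ and $Z_2$ lie in the same orbit of the natural ${\rm Out}(\widetilde{\sf G})$-action on the set of subgroups of ${\mathscr C}(\widetilde{\sf G})$, where ${\rm Out}(\widetilde{\sf G})$ is identified with the automorphism group of the Dynkin diagram. I would then split into cases by the Cartan type of $\widetilde{\sf G}$, using the description of ${\mathscr C}(\widetilde{\sf G})$ from \cite[Table\,3]{OV}. For all types outside ${\sf D}_\ell$ with even $\ell\geqslant 4$, the center is cyclic, so $Z_1=Z_2$ and there is nothing to prove. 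For $\widetilde{\sf G}={\sf Spin}_{4m}$ the center is the Klein four-group, so $d\in\{1,2,4\}$; the values $d=1$ and $d=4$ force $Z_1=Z_2$, and for $d=2$ with $m=2$ the three subgroups of order $2$ form a single ${\rm Out}({\sf Spin}_8)$-orbit (triality).

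The main obstacle, and the only substantive case, is $m>2$ and $d=2$: here the two ${\rm Out}({\sf Spin}_{4m})$-orbits on subgroups of order $2$ correspond to the quotients ${\sf SSpin}_{4m}$ (half-spin) and ${\sf SO}_{4m}$, so I must show that their underlying manifolds are not homotopy equivalent. This is precisely \cite[Thm.\,9.1]{BB}, which establishes that for $m>2$ the mod-$2$ cohomology rings $H^*({\sf SSpin}_{4m},\mathbb Z/2\mathbb Z)$ and $H^*({\sf SO}_{4m},\mathbb Z/2\mathbb Z)$ are non-isomorphic as algebras over the Steenrod algebra, hence the two spaces cannot be homotopy equivalent. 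Quoting this result finishes the case analysis and completes the proof. No new compact-Lie-group input beyond Scheerer's theorem and the Baum--Browder computation is required; everything else is a direct transcription of the proof of Theorem \ref{thm9}.
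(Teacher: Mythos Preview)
Your proposal is correct and follows essentially the same approach as the paper. The paper's own proof is just the one-sentence instruction to repeat the proof of Theorem~\ref{thm9} with the compact groups ${\sf Spin}_{4m}$, ${\sf SSpin}_{4m}$, ${\sf SO}_{4m}$ in place of their complex counterparts; your write-up simply spells out that transcription, with the direct appeal to Scheerer's theorem playing the role that Theorem~\ref{thm8} (via Theorem~\ref{thm7}) plays in the algebraic case.
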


\begin{proof}
It repeats the proof of Theorem \ref{thm9} if in it one
assumes that $G_1$ and $G_2$ are connected real compact simple
Lie groups, whose underlying manifolds are  homo\-topy equivalent, and replaces ${\rm Spin}_{4m} $,
${\rm SSpin}_ {4m}$, and $ {\rm SO}_{4m} $, respectively, with
${\sf Spin}_{4m}$, ${\sf SSpin}_{4m}$, and $ {\sf SO}_{4m} $.
\end{proof}

\noindent {\bf 9.\;Questions.}\

1. Previous considerations naturally
lead to the question of finding a classification of pairs of non-isomorphic connected reductive algebraic gro\-ups, whose underlying varieties are isomorphic.
Is it possible to obtain\;it?

2. The same for connected real compact Lie groups, whose underlying mani\-folds are homotopy equivalent.

3. It seems plausible that, using, in the spirit of  \cite{Bo1}, \'etale cohomo\-logy in place of singular homology and cohomology, it is possible  to prove Theorems \ref{thm5} and
\ref{thm6} and implication (c)$\Rightarrow$(a) of Theorem \ref{lem8} in the case of
positive characteristic
of the field $k$.
Are Theorems \ref{thm7}, \ref{thm8}, \ref{thm9} true for such $k$?

4. The author is not aware of examples of connected simple algebraic groups whose underlying variety is a product of algebraic varieties of positive dimension. Do they exist (N.\;L.\;Gor\-de\-ev's question)?

5. The problems considered in this paper are obviously reformulated taking into account rationality questions, i.e., definability of varieties over an algebraically non-closed field $\ell$.\;How are the results of this paper modified in this context? Some of them, for example, Theorem \ref{toroid}, do not change: the above proof of this theorem, with the added remark that the specified left shift is performed by a rational over $\ell$ element, is transferred to the context of definability over $\ell$ and shows that two algebraic groups defined over $\ell$, one of which is a semi-abelian variety, are isomorphic over $\ell$ if and only if their underlying varieties are isomorphic over $\ell$.\;In particular, for tori or abelian varieties defined over $\ell$, their isomorphness over $\ell$
 is equivalent to  isomorphness  of their underlying varieties over $\ell$.

6. The results of this paper
concern the uniqueness of the structure of
a connected reductive algebraic group (respectively, real compact Lie group) on an algebraic variety (respectively, differentiable compact manifold) that admits at least one such structure.\;Is it possible to give a criterion for the existence of at least one such structure on an algebraic variety (respectively, differentiable compact manifold)  in terms of its geometric characteristics?

\vskip 2mm

\noindent{\bf 10.\;Appendix: finiteness theorems for connected reductive al\-geb\-raic groups and compact Lie groups.}
 \label{Ap} In this section, the characteristic of  $k$ can be arbitrary.

\begin{theorem}\label{finired}
The number of all, considered up to isomorphism,
con\-nected reductive algebraic groups
of any fixed rank is finite.
\end{theorem}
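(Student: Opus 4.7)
The plan is to invoke the Chevalley--Demazure classification, which gives a bijection between isomorphism classes of connected reductive algebraic groups over $k$ and isomorphism classes of (reduced) root data $(X, \Phi, X^{\vee}, \Phi^{\vee})$; under this bijection the rank of the group equals the $\mathbb{Z}$-rank of $X$. It therefore suffices to prove the finiteness statement at the level of root data: for a fixed integer $n$, there are only finitely many isomorphism classes of reduced root data with ${\rm rank}(X) = n$.

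Given such a root datum, set $V := X \otimes_{\mathbb{Z}} \mathbb{Q}$, $V_{0} := \mathbb{Q}\Phi$, and $L := X \cap V_{0}$. Then $\Phi$ is a reduced root system inside $V_{0}$ whose root lattice $Q := \mathbb{Z}\Phi$ satisfies $Q \subseteq L$; and because each coroot $\alpha^{\vee}$ lies in $X^{\vee}$ and hence pairs integrally with $X$, one also has $L \subseteq P$, where $P$ denotes the weight lattice of $\Phi$ in $V_{0}$. By construction $L$ is saturated in $X$, so the quotient $X/L$ is free abelian of rank $c := n - \dim_{\mathbb{Q}} V_{0}$, and the exact sequence $0 \to L \to X \to X/L \to 0$ splits, yielding an isomorphism $X \cong L \oplus \mathbb{Z}^{c}$.

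Now one counts. By the Killing--Cartan classification, the set of isomorphism classes of reduced root systems of rank at most $n$ is finite. For each such $\Phi$, the number of intermediate lattices $Q \subseteq L \subseteq P$ is finite, since the fundamental group $P/Q$ of the root system is a finite abelian group; and the integer $c = n - {\rm rank}\,\Phi$ ranges over a finite set as well. It remains to verify that the triple $(\Phi, L, c)$ determines the root datum up to isomorphism: given two root data with isomorphic $(\Phi, L, c)$-data, any lattice isomorphism $L \to L'$ that carries $\Phi$ onto $\Phi'$, combined with any isomorphism $\mathbb{Z}^{c} \to \mathbb{Z}^{c}$ on the central complement, assembles into an isomorphism of root data, with the maps on $X^{\vee}$ and on $\Phi^{\vee}$ determined by duality.

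I expect the main delicate point to be this last step, namely that the choice of splitting $X \cong L \oplus \mathbb{Z}^{c}$ contributes no further invariant and that the dual side is correctly recovered. This is controlled by the canonical complement $V_{1} := \bigcap_{\alpha \in \Phi} \ker(\alpha^{\vee}) \subseteq V$ to $V_{0}$, on which every coroot vanishes: the root datum then splits compatibly into a semisimple piece of rank $n-c$ entirely governed by $(\Phi, L)$, and a central toral piece which is merely a free abelian group of rank $c$ carrying no roots. Granting this decomposition, the three finite bounds above combine to prove the theorem.
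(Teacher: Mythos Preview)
Your strategy of reducing to root data is a natural alternative to the paper's approach. The paper instead writes every connected reductive $G$ as $(Z\times S)/F$ with $Z={\mathscr C}(G)^\circ$ a torus, $S$ simply connected semisimple, and $F$ a finite central subgroup, and then---since there are infinitely many such $F$ once $\dim Z>0$---uses the action of ${\rm Aut}(Z)\simeq{\rm GL}_n(\mathbb Z)$ and Smith normal form to show that only finitely many isomorphism classes of quotients $(Z\times S)/F$ occur.

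However, your key claim that the triple $(\Phi,L,c)$ determines the root datum up to isomorphism is false, and the failure is exactly at the point you flagged as delicate: the rational decomposition $V=V_0\oplus V_1$ need \emph{not} descend to a direct-sum decomposition of the lattice $X$. A concrete counterexample is ${\rm GL}_2$ versus ${\rm PGL}_2\times\mathbb G_m$. For ${\rm GL}_2$ take $X=\mathbb Z e_1\oplus\mathbb Z e_2$, $\alpha=e_1-e_2$, $\alpha^\vee=e_1^*-e_2^*$; then $V_0=\mathbb Q(e_1-e_2)$, $L=X\cap V_0=\mathbb Z(e_1-e_2)=Q$, $c=1$, and $V_1=\ker(\alpha^\vee)=\mathbb Q(e_1+e_2)$, so that $(X\cap V_0)\oplus(X\cap V_1)=\mathbb Z(e_1-e_2)\oplus\mathbb Z(e_1+e_2)$ has index $2$ in $X$. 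For ${\rm PGL}_2\times\mathbb G_m$ take $X=\mathbb Z^2$, $\alpha=(1,0)$, $\alpha^\vee=(2,0)$; again $L=\mathbb Z\alpha=Q$ and $c=1$. Thus both groups have $\Phi$ of type ${\sf A}_1$, $L=Q$, $c=1$, yet ${\rm GL}_2\not\simeq{\rm PGL}_2\times\mathbb G_m$ since their derived subgroups ${\rm SL}_2$ and ${\rm PGL}_2$ are not isomorphic. The point is that once the semisimple rank is smaller than $n$, the coroots $\alpha^\vee\in X^\vee$ are \emph{not} determined by $(X,\Phi)$: for ${\rm GL}_2$ no complement $M$ to $L$ in $X$ lies inside $V_1$, so no splitting makes every $\alpha^\vee$ vanish on $M$.

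What is missing is a finiteness statement for this ``gluing''. One does have the bound $[X:(X\cap V_0)\oplus(X\cap V_1)]\leqslant |P/L|$ (since the $V_0$-projection of $X$ along $V_1$ lands in $P$), but one must still show that, up to automorphisms of the rank-$c$ central lattice, only finitely many isomorphism classes of root data with given $(\Phi,L,c)$ arise. Carrying this out is, in dual language, exactly the Smith-normal-form reduction that the paper performs on the finite subgroups $F\subset{\mathscr C}(Z\times S)$.
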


\begin{proof}
For every connected reductive group $G$, there is a torus $Z$ and a simply con\-nected semisimple algebraic group $S$ such that the group $G$
is the quotient group of the group $Z \times S$ by a finite central subgroup. Indeed, let $S$
 be the universal covering group of the connected semi\-simple group
${\mathscr D}(G) $, let $\pi \colon S \to {\mathscr D} (G)$  be the natural projection, and
  let $ Z = {\mathscr C}(G)^\circ $.\;Then the map
  $Z \times S \to G$,
  $(z, s) \mapsto z \! \cdot \! \pi (s)$ is an epimorphism with a finite kernel, i.e., the natural projection to the quotient group by a finite central subgroup.

Being simply connected, the group $S$ is, up to isomorphism, uniquely determined by the type of its root system. Insofar as the set of types of root systems of any fixed rank is finite, tori of the same dimension are isomorphic, and $\mathscr C(S)$ is a finite group,
the problem comes down to proving that,
although for $\dim (Z)>0$  there are infinitely many
finite subgroups $F$ in $ \mathscr C(Z \times S)$, the set of all,
up to isomorphism,
groups of the form $ (Z \times S)/F$ is finite. Note that for every element
$\sigma \in {\rm Aut} (Z \times S) $, the groups $(Z\times S)/F$ and $(Z\times S)/\sigma (F)$
are isomorphic.

Proving this, we put
\begin{equation*}
n: = \dim (Z)> 0,
\end{equation*}
and let $\varepsilon_1,\ldots, \varepsilon_n$ be a basis of the group ${\rm Hom}(Z, \mathbb G_m)\simeq \mathbb Z^n$.

For every positive integer $r$, denote by ${\mathcal D}_{r \times n}$ the set of all matrices $(m_ {ij}) \in
{\rm Mat}_{r \times n} (\mathbb Z) $ such that
\begin{enumerate}[\hskip 4.2mm \rm (a)]
\item $ m_{ij} = 0$ for $ i \neq j $;
\item $ m_{ii} $ divides $m_{i + 1, i + 1} $;
\item $ m_{ii} = m_{ii} '$ (see the notation in Section 1).
\end{enumerate}

Consider a matrix $M= (m_{ij})\in {\rm Mat}_ {r \times n} (\mathbb Z) $. Then
\begin{equation} \label {matri}
Z_M: = \textstyle \bigcap_{i = 1}^{r}{\rm ker}(\varepsilon_1^{m_ {i1}}
\cdots \varepsilon_n^{m_ {in}})
\end {equation}
is an algebraic $\big(n - {\rm rk}(M) \big)$-dimensional
subgroup of the group $Z$.
Every algebraic subgroup of the group $Z$ is obtained in this way.
If  the matrix $M=(m_{ij})$ shares properties (a) and (b),
then $Z_M = Z_{M'}$, where $M': = (m_ {ij} ') $,
because ${\rm ker} (\varepsilon_i^{d})={\rm ker}(\varepsilon_i^{d'}) $.
If $ r = n $, and the matrix $ M $ is non-degenerate and shares properties (a), (b), (c), then $ Z_M $
is a finite abelian group with the invariant factors $|m_{11 }|, \ldots\break
\ldots, |m_ {nn}| $.

The elementary transformations of rows of the matrix $M$ do not chan\-ge
the group $Z_M$. If $\tau_1, \ldots, \tau_n$ is another basis of the group\linebreak
$ {\rm Hom} (Z, \mathbb G_m) $, then $\tau_i = \varepsilon_1^{c_ {i1}}
\cdots \varepsilon_n^{c_ {in}}$, where
$C=(c_ {ij}) \in {\rm GL}_r (\mathbb Z)$.
The automorphism of the group ${\rm Hom} (Z, \mathbb G_m) $,
which maps $ \varepsilon_i$ to $\tau_i $ for every $i$,
has the form $ \sigma_ {C}^{*} $, where $ \sigma_C $  is an automorphism of the group $Z$. The mapping $ {\rm GL}_n (\mathbb Z) \to {\rm Aut}(Z) $, $ C \mapsto \sigma_C $, is a group isomorphism and the following equality holds:
\begin{equation}\label{sC}
Z_{MC}=\sigma_C(Z_M)
\end{equation}
Since the elementary transformations of the columns of the matrix $M$ are realized
by multiplying the matrix $M$ on the right by the corresponding mat\-rices from
$ {\rm GL}_r (\mathbb Z) $, and by means of the elementary
transformations of rows and columns the matrix $M$ can be transformed into its Smith diagonal
normal form, \eqref{sC} implies the existence of an automorphism $ \nu
\in {\rm Aut}(Z) $ and a matrix $ D \in \mathcal D_ {r \times n} $ such that
$ \nu (Z_M) = Z_D$.

Now consider a finite subgroup $F$ in $ \mathscr C (Z \times S) =
Z \times {\mathscr C} (S)$ and the canonical projections
\begin{equation*}
Z \xleftarrow{\pi_Z} F
\xrightarrow{\pi_S} \mathscr C (S).
\end{equation*}
The groups $ (Z \times S)/F$ and
$ \big((Z \times S)/(F \cap Z) \big)/\big(F/(F\cap Z)\big)$
are isomorphic.\;Being an $n$-dimensional torus,
the group $Z /(F \cap Z) $ is isomorphic to the torus $ Z $. Therefore, the groups
$ (Z \times S)/(F \cap Z)$ and $ Z \times S $ are isomorphic.\;Hence, without changing, up to isomorphism,
the group $(Z \times S)/F$,
we can (and will) assume that $ F \cap Z = \{e\}$.\;Then $ {\rm ker} (\pi_S) = \{e\}$, and therefore,
$ \pi_S $ is an isomorphism between $F$ and the subgroup $ \pi_S(F) $ in the group $ \mathscr C(S) $.
Let $ \alpha \colon \pi_S (F) \to \pi_Z (F) $ be an epimorphism that is the composition of the isomorphism inverse to $ \pi_S $ with $ \pi_Z $. Then
\begin{equation*}
F=
\{\alpha(g)\cdot g\mid g\in \pi_S(F)\}.
\end{equation*}

The subgroup $ \pi_Z (F) = \alpha (\pi_S (F) )$ in $ Z $ is finite and therefore has the form
$ Z_M $ for some non-degenerate matrix $ M \in {\rm Mat}_{n \times n} (\mathbb Z) $.
According to the above,
there is an element $ \nu \in {\rm Aut} (Z) $ such that $ \nu (\pi_Z (F)) = Z_D $, where $ D $
is a  non-degenerate
matrix from
$ \mathcal D_{n \times n} $; we denote by the same letter the extension of $ \nu $ up to an element of $ {\rm Aut} (Z \times S) $, which is the identity on $S$. Replacing
the group $ F $ by the group $ \nu (F) $ shows that, without changing,
  up to isomorphism,
  the group $ (Z \times S) / F $,
we can (and will) assume that $ \pi_Z (F) = Z_D $.

Thus, if $ \mathscr F $ is the set
all subgroups
in $ Z \times \mathscr C (S) $ of the form
\begin{equation*}
\{\gamma (h) \cdot h \mid h \in H \},
\end{equation*}
where $H$ runs through all subgroups of $\mathscr C(S) $, and $ \gamma $ through all epimor\-phisms $ H \to Z_D $ with a non-degenerate matrix $ D \in \mathcal D_ {n \times n} $, then $ F \in \mathscr F $.
Since the group $ \mathscr C(S) $ is finite, and the order of the group $ Z_D $ is $ |\det (D)| $, the set $ \mathscr F $ is finite. This completes the proof.
\end{proof}

\begin{theorem}\label{rd}
The number of all, considered up
to isomorphism,
root data of any
fixed rank is finite.
\end{theorem}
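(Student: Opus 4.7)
The plan is to deduce Theorem \ref{rd} directly from Theorem \ref{finired} by invoking the Chevalley classification of connected reductive algebraic groups via reduced root data. Concretely, to every connected reductive algebraic group $G$ with a maximal torus $T$ one attaches the reduced root datum
\[
\Psi(G,T)=(X(T),\Phi(G,T),X_*(T),\Phi^\vee(G,T)),
\]
whose rank, by definition the rank of the free abelian group $X(T)$, equals $\dim(T)$, i.e.\ the rank of $G$ in the sense used in Theorem \ref{finired}. Since any two maximal tori of $G$ are conjugate, the isomorphism class of $\Psi(G,T)$ depends only on $G$.

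The key input I would cite as a black box is the existence and uniqueness part of the Chevalley classification theorem, valid over an algebraically closed field of arbitrary characteristic (for instance SGA 3, Expos\'e XXV, or Springer's \emph{Linear Algebraic Groups}, 10.1.1): every reduced root datum is isomorphic to $\Psi(G,T)$ for some connected reductive algebraic group $G$, and two connected reductive algebraic groups are isomorphic if and only if their root data are. Thus the assignment $G\mapsto\Psi(G,T)$ induces a bijection between the set of isomorphism classes of connected reductive algebraic groups of rank $n$ and the set of isomorphism classes of reduced root data of rank $n$.

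With this bijection in hand, finiteness of the latter set for each fixed $n$ is immediate from Theorem \ref{finired} applied to the former. No further argument is required beyond matching the two notions of rank on the two sides of the correspondence, which holds by construction. The main ``obstacle'' in this proof is therefore essentially bibliographic rather than mathematical: one must be careful to cite the characteristic-free version of the Chevalley classification, since Theorem \ref{finired} itself is stated without any restriction on $\mathrm{char}(k)$.
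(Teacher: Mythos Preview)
Your proposal is correct and follows essentially the same approach as the paper: both deduce Theorem~\ref{rd} from Theorem~\ref{finired} via the Chevalley classification of connected reductive groups by their root data (the paper cites \cite[Thms.\,9.6.2, 10.1.1]{Sp} for this). Your version simply spells out in more detail the bijection and the matching of the two notions of rank.
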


\begin{proof}
This follows from Theorem \ref{finired} because
connected reduc\-ti\-ve al\-geb\-raic groups are classified by their root data, see
\cite[Thms.\,9.6.2, 10.1.1]{Sp}.
\end{proof}

\begin{theorem}\label{cogr}
The number of all, considered
up to isomorphism,
con\-nected real compact Lie groups
of any fixed rank is finite.
\end{theorem}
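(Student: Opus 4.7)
The plan is to reduce Theorem \ref{cogr} to Theorem \ref{finired} via the classical correspondence between connected compact real Lie groups and connected reductive complex algebraic groups. Recall (as already used in Section 2 and referenced via \cite[Chap.\,5, \S2]{OV}) that every connected reductive complex algebraic group $G$ has a compact real form $\sf G$, unique up to conjugation; conversely, every connected compact real Lie group $\sf K$ arises as the compact real form of a uniquely determined (up to isomorphism) connected reductive complex algebraic group $K_{\mathbb C}$, namely its complexification.

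First I would make precise the statement that this construction induces a bijection between isomorphism classes of connected reductive complex algebraic groups and isomorphism classes of connected compact real Lie groups. In one direction, isomorphic algebraic groups yield conjugate (in particular isomorphic) compact forms. In the other direction, an isomorphism of compact Lie groups $\sf G_1 \to \sf G_2$ extends uniquely to a holomorphic (hence algebraic) isomorphism of their complexifications $G_1 \to G_2$, by the standard fact that homomorphisms between compact Lie groups extend uniquely to homomorphisms between their complexifications.

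Next I would verify that rank is preserved under this correspondence. A maximal torus $T \subset G$ is an algebraic torus of some dimension $r$, and its intersection with the compact form $\sf G$ is a compact real torus of real dimension $r$, which is a maximal torus of $\sf G$. Conversely, the complexification of a maximal torus of $\sf G$ is a maximal torus of $G$. Hence the rank of $\sf G$ as a compact Lie group equals the rank of $G$ as a reductive algebraic group.

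With these two ingredients in place, the proof is immediate: given a positive integer $r$, any connected compact real Lie group $\sf K$ of rank $r$ is the compact real form of a connected reductive complex algebraic group $K_{\mathbb C}$ of rank $r$, and non-isomorphic $\sf K$ correspond to non-isomorphic $K_{\mathbb C}$. By Theorem \ref{finired}, there are only finitely many isomorphism classes of such $K_{\mathbb C}$, hence only finitely many isomorphism classes of such $\sf K$. The only potential obstacle is being precise about the functoriality of the compact form/complexification correspondence, but this is standard material and is already invoked elsewhere in the paper via the cited reference \cite{OV}, so no new ingredient is needed.
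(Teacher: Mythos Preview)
Your proposal is correct and follows essentially the same approach as the paper: the paper's proof is a one-line appeal to Theorem \ref{finired} via the correspondence between connected reductive complex algebraic groups and connected compact real Lie groups given by passing to a compact real form (citing \cite[Thm.\,5.2.12]{OV}). You have simply spelled out in more detail the two points implicit in that appeal, namely that the correspondence is bijective on isomorphism classes and preserves rank.
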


\begin{proof}
This follows from Theorem \ref{finired} in view of the correspondence between connected reductive algebraic groups and connected
real com\-pact Lie groups, given by passing to a real compact form, see \cite[Thm. 5.2.12]{OV}.
\end{proof}

\begin{remark}\label{up}{\rm
Using this proof of Theorem \ref{finired}, one can obtain an upper bound for
the numbers specified in it and in Theorems \ref{rd} and \ref{cogr}.}
\end{remark}

\begin{remark}\label{r5} {\rm Since the rank does not exceed the dimension of the group, Theorem \ref{finired}  (respectively, Theorem \ref{cogr}) shows that
the number of all,
considered up to an iso\-morphism\-m, connected
reductive algebraic groups (respectively, connected real compact Lie groups) of any fixed dimension is finite.}
\end{remark}


\begin{thebibliography}{10}

 \bibitem{AF} A.\;Andreotti, T. Frankel, {\it The Lefschetz theorem on hyperplane sections}, Annals of Math. {\bf 69} (1959), no. 3, 713--717.

 \bibitem{Bi} A.\;Bia{\l}ynicki-Birula, {\it Remarks on the action of an algebraic torus on $k^n$}, Bull. Acad. Polon. Sci. S\'er. Sci. Math., Astr., Phys. {\bf 14} (1966), no. 4, 177--181.

     \bibitem{BB} P.\;F.\;Baum, W.\;Browder, {\it The cohomology of quotients
of classical groups}, To\-po\-logy {\bf 3} (1965), 305--336.

\bibitem{Bor} A. Borel, {\it Groupes lin\'eares alg\'ebriques}, Annals of Math. (2) {\bf 64} (1956), 20--82.

 \bibitem{Bo1} A. Borel, {\it On affine algebraic homogeneous spaces}, Arch. Math. {\bf 45} (1985), 74--78.

     \bibitem{Bo2}
A. Borel, {\it Linear Algebraic Groups}, Graduate Texts in Mathematics, Vol. 126, Sprin\-ger-Verlag, New York, 1991.

\bibitem{Bou} N. Bourbaki, {\it Groupes et Alg\'ebres de Lie}, Chap. I, Springer, Berlin, 2007.

\bibitem{Br} M.\;Brion, {\it Some structure theorems for algebraic groups}, in:
{\it Algebraic Groups: Structure and Actions},  Proc. Sympos. Pure Math., Vol. 94, Amer. Math. Soc., Providence, RI, 2017, pp. 53--126.

\bibitem{Ch} C. Chevalley, {\it Th\'eorie des Groupes de Lie}, tome III,
Hermann, Paris, 1955.

\bibitem{D}  V.\;I.\;Danilov, {\it Cohomology of algebraic varieties},
 in: {\it Algebraic Geometry}--II,  En\-cyc. Math. Sci., Vol. 35,
 Springer-Verlag, Berlin, 1996, pp. 1--125, 255--262.

 \bibitem{Di} G.\;A.\;Dill, {\it On morphisms between connected commutative algebraic groups},
{\tt \footnotesize arXiv:2107.14667v1\,[math.AG]\,30\,Jul\,2021.}

  \bibitem{G} A. Grothendieck, {\it Torsion homologique et sections rationnelles}, in: {\it S\'eminaire Clause Chevalley}, tome 3, 1958, exp. 5, pp. 1--29.

  \bibitem{Ha} A. Hatcher, {\it Algebraic Topology}, Cambridge University Press, Cambridge, 2002.

  \bibitem{Hu} J. E. Humphreys, {\it Linear Algebraic Groups}, Springer-Verlag, New York, 1975.

  \bibitem{K} B. Kunyavski, {\it Letter  to V.\;L.\;Popov}, January $18$, $2021$.

  \bibitem{L} M. Lazard, {\it Sur le nilpotence de certains groupes algebriques}, CRAS Paris {\241} (1955), 1687--1689.

\bibitem{M} J. Milnor, {\it Morse Theory}, Princeton Univ. Press, Princeton, NJ, 1963.

\bibitem{O} A.\;L.\;Onishchik, {\it Topology of Transitive Transformation Groups}, Johann Amb\-rosius Barth, Leipzig, 1994.

    \bibitem{OV} A.\;L.\;Onishchik, E.\;B.\;Vinberg, {\it Lie Groups and Algebraic Groups}, Springer-Verlag, Berlin, 1990.

        \bibitem{P1}
V.\;L.\;Popov,\;{\it Variations on the theme of Zariski's Cancellation Problem}, in: {\it Poly\-no\-mial Rings and Affine Algebraic Geometrу}
(Tokyo, Japan, February 12--16, 2018), Springer Proc. Math. Stat., Vol. 319,
Springer, Cham, 2020, pp.\;233--250.

\bibitem{P2} V.\;L.\;Popov,\;{\it On algebraic group varieties}, {\tt  \footnotesize arXiv:2102.08032v1\,[math.AG]\,16 Feb 2021}.

\bibitem{P3} V.\;L.\;Popov, {\it Underlying varieties and group structures}, {\tt \footnotesize arXiv:2105.12861v1\break [math.AG]\,26 May 2021.}


\bibitem{P4} V.\;L.\;Popov, {\it Underlying algebraic varieties of algebraic groups}, plenary talk, Internat. conference {\it Multidimensional Residues and Tropical Geometry},\;Sochi,
June 16, 2021, 	 {\tt \footnotesize http:$/\!\!/$www.mathnet.ru/php/presentation.phtml?$\&$phtml\break ?$\&$presentid=30744$\&$option$\underline{\;\, }$lang=eng} 	

    \bibitem{P6} V.\;L.\;Popov,\;{\it Group varieties and group structures of algebraic groups},
Internat. con\-f. {\it Linear Algebraic Groups and Related Structures}, St. Pe\-ters\-burg, No\-vem\-ber 11, 2021,
{\tt \footnotesize http:$/\!\!/$www.mathnet.ru/php/seminars.phtml\break ?$\&$presentid=33258$\&$option$\underline{\ }$lang=eng}


\bibitem{P5} V.\;L.\;Popov,\;{\it Group structures on algebraic varieties}, Doklady Math.\;{\bf 104}  (2021), no. 2, 264--266.

 \bibitem{Ros} M.\;Rosenlicht, {\it  Some basic theorems on algebraic groups}, Amer. J. Math.
{\bf 78} (1956), no. 2, 4-1--443.


 \bibitem{R}  M. Rosenlicht, {\it Toroidal algebraic groups}, Proc. Amer. Math. Soc. {\bf 12} (1961), 984--988.

     \bibitem{Sc}    H.\;Scheerer, {\it Homotopie\"aquivalente kompakte Liesche Gruppen},
Topology {\bf 7} (1968), 227--232.

\bibitem{Sp} T.\;A.\;Springer, {\it Linear Algebraic Groups}, Second Edition,
Birkh\"auser, Boston, 1998.

\bibitem{W} A. Weil, {\it Vari\'et\'es Ab\'eliennes et Courbes Alg\'ebriques}, Hermann, Paris, 1948.

\end{thebibliography}
\end{document}